\newtheorem{theorem}{Theorem}[section]
\newtheorem{lemma}[theorem]{Lemma}
\newtheorem{proposition}[theorem]{Proposition}
\theoremstyle{definition}
\newtheorem{definition}[theorem]{Definition}
\newtheorem{remark}[theorem]{Remark}
\numberwithin{equation}{section}
\numberwithin{equation}{section}
\begin{document}

%
%
%
%
%
%
%
%
%

\title{ On the singular elliptic problem involving the variable order fractional Musielak $g_{x,y}$-Laplacian}

\author{Azeddine BAALAL}
\address{{ Department of Mathematics and Computer Science},
	{A\"in Chock Faculty,  Hassan II University}, 
	{ B.P. 5366 Maarif, Casablanca},
	{Morocco}}
\email{abaalal@gmail.com}

\author{Mohamed BERGHOUT}
 	\address{{Laboratory of Partial Differential Equations, Algebra and Spectral Geometry, Higher School of Education and Training, Ibn Tofail University}, {P.O. Box 242-Kenitra 14000, Kenitra}, {Morocco}}
\email{Mohamed.berghout@uit.ac.ma; moh.berghout@gmail.com}
 
 \author{El-Houcine OUALI* \orcidlink{0009-0007-9106-6441}}
\address{{ Department of Mathematics and Computer Science},
	{A\"in Chock Faculty,  Hassan II University}, 
	{ B.P. 5366 Maarif, Casablanca},
	{Morocco}}
\email{oualihoucine4@gmail.com}

\subjclass{46E35,\; 46E30.}

\keywords{Fractional Musielak-Sobolev spaces,\; Modular spaces,\; Musielak-Orlicz function,\; Singular elliptic problem.}
\date{}

\begin{abstract}
	  In this article, we investigate the existence of weak solutions for the following singular nonlinear elliptic problem :
	  $$\left\{\begin{aligned}(-\Delta)^{s(.,.)}_{G_{x,y}} u(x) +a_{x,y}(|u(x)|)u(x) & = \frac{h(x)}{u^{m(x)}} & & \text { in } \Omega\\ u & >0 & & \text { in } \Omega \\ u & =0 & & \text { in } \mathbb{R}^N \backslash \Omega,\end{aligned}\right.$$
where $N \geq 2, \Omega \subset \mathbb{R}^N$ is a bounded domain with Lipschitz boundary $\partial \Omega$, $m:\overline{\Omega}\rightarrow (0,1)$ is a continuous function, $p:\overline{\Omega}\times\overline{\Omega}\rightarrow (0,1)$ is is a bounded, continuous and symmetric function and $h: \overline{\Omega} \rightarrow \mathbb{R}$ is a positive continuous function belonging to $ L^{\frac{r(x)}{r(x)+m(x)-1}}(\Omega)$ where $r:\overline{\Omega}\rightarrow (1,+\infty)$ is a continuous function such that $1<r(x)<p_s^*(x):=\frac{N p(x,x)}{N-s(x,x) p(x,x)}$. The main tool is variational approach, however,
 various auxiliary tools from the theory of nonlinear functional analysis, convex analysis
 and critical point theory are also applied.
\end{abstract}

\maketitle

\section{Introduction and main results}
 In this work, we investigate the existence of weak solutions to the following singular nonlinear elliptic problem :
 \begin{equation}\label{Prob1}
 \left\{\begin{aligned}(-\Delta)^{s(.,.)}_{G_{x,y}} u(x) +a_{x,y}(|u(x)|)u(x) & = \frac{h(x)}{u^{m(x)}} & & \text { in } \Omega \\  u & >0 & & \text { in } \Omega \\u & =0 & & \text { in } \mathbb{R}^N \backslash \Omega,\end{aligned}\right.
  \end{equation}
 where $N \geq 2, \Omega \subset \mathbb{R}^N$ is a bounded domain with Lipschitz boundary $\partial \Omega$, $h: \overline{\Omega} \rightarrow \mathbb{R}$ is a positive continuous function belonging to $ L^{\frac{r(x)}{r(x)+m(x)-1}}(\Omega)$ where $r:\overline{\Omega}\rightarrow (1,+\infty)$ is a continuous function satisfying $1<r(x)<p_s^*(x):=\frac{N p(x,x)}{N-s(x,x) p(x,x)}$, $m:\overline{\Omega}\rightarrow (0,1)$ is a continuous function, $s:\mathbb{R}^N\times\mathbb{R}^N\rightarrow (0,1)$ is a continuous function such that 
\begin{equation}\label{S1}
s(x, y)=s(y, x) \quad \forall (x, y) \in \mathbb{R}^N\times\mathbb{R}^N,
\end{equation}
\begin{equation}\label{S2}
0<s^{-}=\inf _{\mathbb{R}^N\times\mathbb{R}^N} s(x, y) \leqslant s^{+}=\sup _{\mathbb{R}^N\times\mathbb{R}^N} s(x, y)<1
\end{equation}
and \((-\Delta)^{s(.,.)}_{G_{x,y}}\) is the variable order fractional $G_{x,y}$-Laplacian operator given by

\[
(-\Delta)^{s(.,.)}_{G_{x,y}} u(x) := 2 \lim_{\epsilon \to 0^+} \int_{\mathbb{R}^N \setminus B_\epsilon (x)} a_{x,y} \left( \frac{|u(x) - u(y)|}{|x - y|^{s(x,y)}} \right) \frac{u(x) - u(y)}{|x - y|^{s(x,y)}} \frac{dy}{|x - y|^{N+s(x,y)}}.
\]
where $G: \overline{\Omega} \times \overline{\Omega} \times \mathbb{R} \rightarrow \mathbb{R}$ is a Carathéodory function defined by
\begin{equation}\label{eq1}
G_{x,y}(t):=G(x, y, t)=\int_0^{|t|} g(x, y, \tau) d \tau,
\end{equation}
and
$$g(x, y, t):= \begin{cases}a(x, y, t) t & \text { if } t \neq 0 \\ 0 & \text { if } t=0,\end{cases}$$
with $a: \overline{\Omega} \times \overline{\Omega} \times(0, +\infty) \rightarrow[0, +\infty)$ is a function satisfying :
\begin{itemize}
\item[$({g_{1}})$] $\displaystyle\lim _{t \rightarrow 0} a(x, y, t)t=0$ and $\displaystyle\lim _{t \rightarrow +\infty} a(x, y, t)t=+\infty$ for all $(x, y) \in \overline{\Omega} \times \overline{\Omega}$;
\item[$(g_{2})$] $t\mapsto a_{x,y}(t):=a(x, y, t)$ is continuous on $(0, +\infty)$ for all $(x, y) \in \overline{\Omega} \times \overline{\Omega}$;
\item[$(g_{3})$] $t\mapsto a_{x,y}(t)t$ is increasing on $(0, +\infty)$ for all $(x, y) \in \overline{\Omega} \times \overline{\Omega}$.
\item[\noindent\textbf{$(g_{4})$}\label{cond:G1}] There exist positive constants $g^{+}$and $g^{-}$ such that
\begin{equation}\label{eq1.5}
1<g^{-} \leqslant \frac{ a_{x, y}(t)t^{2}}{G_{x, y}(t)} \leqslant g^{+}<+\infty \quad \textsl{ for all } (x, y) \in \overline{\Omega} \times \overline{\Omega}, \quad  \textsl{ and all  } t > 0 .
\end{equation}
\end{itemize}

Now let us consider the function $\widehat{G}_x: \overline{\Omega} \times \mathbb{R}\rightarrow \mathbb{R} $ given by
\begin{equation}\label{eq2}
\widehat{G}_x(t):=\widehat{G}(x, t):=G(x,x,t)=\int_0^{|t|} \widehat{g}(x,\tau) \mathrm{d} \tau .
\end{equation}
where $\widehat{g}(x,t):=\widehat{a}(x,t)t=a(x, x,t)t$ for all $(x, t) \in \overline{\Omega} \times (0,+\infty)$.
 The assumption (\hyperref[cond:G1]{$g_{4}$}) implies that
\begin{equation}\label{eq1.6}
1<g^{-} \leqslant \frac{ \widehat{g}(x,t)t}{\widehat{G}(x,t)} \leqslant g^{+}<+\infty, \quad \textsl{ for all } x \in \overline{\Omega},\quad \textsl{ and all } t > 0 .
\end{equation}\\
As a consequence of the previous assumptions, one may obtain the following properties:
\begin{itemize}
\item[(i)] $t\rightarrow G_{x,y}(t)$ is  even, continuous,  strictly increasing and convex on $\mathbb{R}$;
\item[(ii)] $\displaystyle\lim _{t \rightarrow 0} \frac{G_{x,y}(t)}{t}=0$;
\item[(iii)]
$
\displaystyle\lim _{t \rightarrow +\infty} \frac{G_{x,y} (t)}{t}=+\infty$;
\item[(iv)] $G_{x,y}(t)>0$ for all $t>0$.
\end{itemize}
See for instance the book of Kufner–John-Fu\v{c}\'{\i}k \cite[Lemma 3.2.2]{kajofs1977}.
\begin{definition}\label{Def1.1}
Let $\Omega$ be an open subset of $\mathbb{R}^{N}$. A function $G: \overline{\Omega} \times \overline{\Omega} \times \mathbb{R} \rightarrow \mathbb{R}$ is said to be a generalized N-function if it fulfills the properties (i)$-$(iv) above for a.e. $(x, y) \in \overline{\Omega} \times \overline{\Omega}$, and for each $t\in \mathbb{R}$, $G_{x,y}(t)$ is measurable in $(x, y)$.
\end{definition}

In recent years, equations involving non-local operators have attracted increasing attention due to their wide range of applications in areas such as mechanics, phase transitions, population dynamics, image processing, and game theory; see, for instance, \cite{caffarelli2012non,caffarelli2007extension,caffarelli2011uniform,metzler2004restaurant} and the references therein. An extensive body of literature is devoted to such problems with singularities, particularly from the perspective of theoretical analysis. For example, in \cite{bhobrm2023anewclass}, the authors studied the existence of weak solutions to the following singular elliptic problem
  $$\left\{\begin{aligned}(-\Delta)^{s(.,.)}_{G} u(x)& = h(x)f^{'}(x,|u|)\frac{u}{|u|} & & \text { in } \Omega \\ u & =0 & & \text { in } \mathbb{R}^N \backslash \Omega,\end{aligned}\right.$$
 where $\Omega$ is a bounded domain with Lipschitz boundary, $f^{'}$ is a generalized singular term and $h$ is a positive function. In \cite{bal2024singular}, the authors show the existence of solutions for the following
 problem  $$\left\{\begin{aligned}(-\Delta_{g})^{s}u(x)& = \frac{f(x)}{u^{m(x)}} & & \text { in } \Omega \\ u & >0 & & \text { in } \Omega \\ u & =0 & & \text { in } \mathbb{R}^N \backslash \Omega,\end{aligned}\right.$$
 where $\Omega$ is a smooth bounded domain in $\mathbb{R}^N$, 
$m \in C^{1}(\overline{\Omega})$, and $(-\Delta_{g})^{s}$ 
is the fractional $g$-Laplacian with $g$ the antiderivative of a Young 
function and $f$ in suitable Orlicz space. In \cite{chammem2023nehari}, the authors show the existence of solutions for the following problem
 \[
\begin{cases}
(-\Delta)^{s}_{p(x,.)} u + \mu |u|^{q(x)-2}u = \lambda g(x)u^{-m(x)} +\lambda f(x,u), & \text{ in } \Omega, \\
u = 0, & \text{ on } \partial \Omega, 
\end{cases}
\]
where $\Omega$ is a bounded domain in $\mathbb{R}^N$, $(-\Delta)^{s}_{p(x,.)}$ 
is the fractional $p(x,.)$-Laplacian operator, $\mu$ and $\lambda$ are positive parameters and $m:\overline{\Omega}\longrightarrow (0,1)$ is a continuous function. Notice that  the
 fractional Musielak-Sobolev spaces with variable order $W^{s(.,.),G_{x, y}}(\Omega)$ (see \cite{ouali2025some,srtM2024eignv}) represent a generalization of the fractional Orlicz Sobolev spaces $W^{s,G}(\Omega)$ (see \cite{bahrouni2020embedding,bot2020basic,BjSa2019}) and of the fractional Sobolev spaces with variable exponents $W^{s,q(.),p(.,.)}(\Omega)$ (see \cite{baalal2018traces,babm2018density,barv2018onanew,dlrj2017traces,kjv2017fract,km2023bourgan}). Consequently, many properties originally established for $W^{s,G}(\Omega)$ and $W^{s,q(.),p(.,.)}(\Omega)$ have been extended to also cover $W^{s(.,.),G_{x, y}}(\Omega)$. In addition, the nonlocal integro-differential operator $(-\Delta)^{s(.,.)}_{G_{x,y}}$ associated with the fractional Musielak-Sobolev spaces with variable order, can be viewed as a generalization of the fractional $p(x,·)$-Laplacian operator $(\Delta)^{s}_{p(x,.)}$ when $G_{x,y}(t)=|t|^{p(x,y)}$ and $s(x,y)=s=$constant and of the fractional $g$-Laplacian operator $(-\Delta_{g})^{s}$ when $G_{x,y}(t)=G$ i.e. $G$	 is independent of variables $x$, $y$, and $s(x,y)=s=$constant.
 
 Inspired by the studies mentioned above, this paper aims to extend this class of problems to the framework of fractional Musielak–Sobolev spaces with variable order. In particular, we consider the nonlocal singular elliptic problem \eqref{Prob1} and establish the existence of a positive ground state solution with a negative energy level. To the best of our knowledge, this is the first contribution addressing nonlocal singular problems within this class of fractional Musielak–Sobolev spaces with variable order.
\par Let $G_{x,y}$ be a generalized N-function, $\Omega$ be an open subset of $\mathbb{R}^{N}$ and $s(.,.): \mathbb{R}^N\times\mathbb{R}^N \rightarrow(0,1)$ a continuous function satisfying \eqref{S1} and \eqref{S2}.
In correspondence to $\widehat{G}_{x}=G_{x,x}$, the Musielak-Orlicz space $L^{\widehat{G}_x}(\Omega)$ is defined as follows  
$$
L^{\widehat{G}_x}(\Omega)=\left\{u: \Omega \longrightarrow \mathbb{R} \text { mesurable : } J_{\widehat{G}_x}(\lambda u)<+\infty \text { for some } \lambda>0\right\},
$$
where $J_{\widehat{G}_x}( u):=\displaystyle\int_{\Omega} \widehat{G}_x(|u(x)|) \mathrm{d} x,$
and the $s(.,.)$-fractional Musielak-Sobolev space $W^{s(.,.),G_{x, y}}(\Omega)$ is defined by
$$
\begin{aligned}
&W^{s(.,.),G_{x, y}}(\Omega):=\left\{u \in L^{\widehat{G}_x}(\Omega): J_{s(.,.),G_{x, y}}( \lambda u)<+\infty \quad\textnormal{ for some } \lambda>0 \right\},
\end{aligned}
$$
where  $$J_{s(.,.),G_{x, y}}(u):= \int_{\Omega} \int_{\Omega} G_{x, y}\left(D_{s(.,.)}u\right)d\mu,$$\\
with $D_{s(.,.)}u:=D_{s(.,.)}u(x,y):=\displaystyle\frac{u(x)-u(y)}{|x-y|^{s(x,y)}}$ and $d\mu:=\displaystyle\frac{\mathrm{d} x \mathrm{~d} y}{|x-y|^{N}}.$

\begin{remark}
\begin{itemize}
\item[$(a)$] For the case: $s(x, y)=s$ i.e. $s$ is constant, $W^{s,G_{x,y}}(\Omega)$ is the fractional Musielak-Sobolev space, (see \cite{abss2022class, abss2023emb}), defined by	
$$
\begin{aligned}
&W^{s,G_{x,y}}(\Omega):=\left\{u \in L^{\widehat{G}_{x}}(\Omega): J_{s,G_{x,y}}( \lambda u)<+\infty \quad\textnormal{ for some } \lambda>0 \right\},
\end{aligned}
$$                                                                                                                                                                 where 	$$J_{s,G_{x,y}}(u):= \int_{\Omega} \int_{\Omega} G_{x,y}\left(D_{s}u(x,y)\right)d\mu.$$	
\item[$(b)$] For the case: $G_{x, y}(t)=G(t)$ i.e. $G$	 is independent of variables $x$, $y$, we say that $L^{G}(\Omega)$ and $W^{s(.,.),G}(\Omega)$ are Orlicz spaces and $s(.,.)$-fractional Orlicz-Sobolev spaces respectively (see \cite{bhobrm2023anewclass}) such that	
$$
\begin{aligned}
&W^{s(.,.),G}(\Omega):=\left\{u \in L^{G}(\Omega): J_{s(.,.),G}( \lambda u)<+\infty \quad\textnormal{ for some } \lambda>0 \right\},
\end{aligned}
$$                                                                                                                                                                 where 	$$J_{s(.,.),G}(u):= \int_{\Omega} \int_{\Omega} G\left(D_{s(.,.)}u(x,y)\right)d\mu.$$												\item[$(c)$] For the case: $G_{x, y}(t)=|t|^{p(x, y)}$ for all $(x, y) \in \overline{\Omega} \times \overline{\Omega}$, where $p: \overline{\Omega} \times \overline{\Omega} \longrightarrow(1,+\infty)$ is a continuous function such that
\begin{equation}\label{P1}
1<p^{-} \leqslant p(x, y) \leqslant p^{+}<+\infty
\end{equation}
and
\begin{equation}\label{P2}
p \text { is symmetric, that is, } p(x, y)=p(y, x)\quad \forall(x, y) \in \overline{\Omega} \times \overline{\Omega}, \textnormal{ with } s^{+}p^{+}<N,
\end{equation}
if denoted by $\overline{p}(x)=p(x, x)$ for all $x \in \overline{\Omega}$, then we replace $L^{\widehat{G}_x}$ by $L^{\overline{p}(x)}$ and $W^{s(.,.),G_{x, y}}$ by $W^{s(.,.), p(x, y)}$ and we refer them as variable exponent Lebesgue spaces and $s(.,.)$-fractional Sobolev spaces with variable exponent respectively (see \cite{zuo2021critical}) defined by
$$
L^{\overline{p}(x)}(\Omega)=\left\{u: \Omega \longrightarrow \mathbb{R} \text { measurable }: \int_{\Omega}|u(x)|^{\overline{p}(x)} \mathrm{d} x<+\infty\right\}
$$
and
$$
W^{s(.,.), p(x, y)}(\Omega)=\left\{u \in L^{\overline{p}(x)}(\Omega): \rho_{s(.,.),p(.,.)}(\lambda u)<+\infty \text { for some } \lambda>0\right\}
$$
where	$$\displaystyle\rho_{s(.,.),p(.,.)}( u):=\int_{\Omega}\int_{\Omega} \frac{|u(x)-u(y)|^{p(x, y)}}{|x-y|^{s(x,y) p(x, y)+N}} \mathrm{~d} x \mathrm{~d} y.$$																	
\end{itemize}										
\end{remark}

In light of assumption (\hyperref[cond:G1]{$g_{4}$}), the functions $G_{x, y}$ and $\widehat{G}_x$ satisfy the $\Delta_2$-condition (see \cite[Proposition 2.3]{mmrv2008neu}), written $G_{x, y} \in \Delta_2$ and $\widehat{G}_x \in \Delta_2$, that is there exists a positive constant $K$ such that
\begin{equation}
G_{x, y}(2 t) \leqslant K G_{x, y}(t) \quad \textnormal{ for all  }\quad (x, y) \in \overline{\Omega} \times \overline{\Omega} \quad \textnormal{ and  }\quad t > 0 \text {, }
\end{equation}
and
\begin{equation}\label{1.6eq2}
\widehat{G}_x(2 t) \leqslant K \widehat{G}_x(t) \quad \textnormal{ for all  }\quad x \in \overline{\Omega} \quad \textnormal{ and  }\quad t > 0.
\end{equation}

\begin{definition}\label{Def1.B}
We say that a generalized N-function $G_{x,y}$ satisfies the fractional boundedness condition, writen $G_{x,y}\in \mathcal{B}_{f} $, if there exist $C_{1}, C_{2} > 0$ such that
\begin{equation}
 C_{1}\leqslant G_{x,y}(1)\leqslant C_{2} \quad \forall (x,y)\in \overline{\Omega}\times \overline{\Omega}.
 \end{equation}
\end{definition}
\begin{definition}\label{DefinA} Let $\widehat{A}_x$ and $\widehat{B}_x$ be two generalized N-functions. We say that $\widehat{A}_x$ essentially grows more slowly than $\widehat{B}_x$ near infinity, and we write $\widehat{A}_x \prec \prec \widehat{B}_x$, if for all $k>0$, we have

$$
\lim _{t \rightarrow+\infty} \frac{\widehat{A}_x(k t)}{\widehat{B}_x(t)}=0, \quad \text { uniformly in } x \in \overline{\Omega}.
$$
\end{definition}
																								
Now, we give some examples of generalized N-functions satisfying the previous assumptions:
\begin{itemize}
\item[(1)] Let $G_{x, y}(t)=\frac{1}{p(x,y)} |t|^{p(x, y)}$, for all $(x, y) \in \overline{\Omega} \times \overline{\Omega}$ and all $t \in \mathbb{R}$, where $p: \overline{\Omega} \times \overline{\Omega} \longrightarrow(1,+\infty)$ is a continuous function satisfying
$$
1<p^{-} \leq p(x, y) \leq p^{+}<+\infty, \text {for all }(x, y) \in \overline{\Omega} \times \overline{\Omega}
$$
and
$$p\left( (x, y)-(z,z)\right)=p(x, y),\quad \text {for all }\quad(x, y),(z,z) \in \overline{\Omega} \times \overline{\Omega}.  $$
In this case the function $G_{x,y}$ satisfies the assumptions $(\hyperref[cond:G1]{g_{1}})-(\hyperref[cond:G1]{g_{4}})$ and $G_{x,y}\in \mathcal{B}_{f}$.
\item[(2)] Let $a$ be a real-valued function defined on $[0, +\infty)$ and having the following properties:
\begin{itemize}
\item[(a)] $a(0)=0, a(t)>0$ if $t>0, \lim _{t \rightarrow +\infty} a(t)=+\infty$;\\
\item[(b)] $a$ is nondecreasing, that is, $s>t$ implies $a(s) \geq a(t)$;\\
\item[(c)] $a$ is right continuous, that is, if $t \geq 0$, then $\lim _{s \rightarrow t+} a(s)=a(t)$.\\		
\end{itemize}	
Let $G_{x, y}(t)=A(t)$, for all $(x, y) \in \overline{\Omega} \times \overline{\Omega}$ and all $t \geqslant 0$, where $\displaystyle A(t):=\int_0^t a(\tau) d \tau$ is an N-function (for definition see \cite{Adams1975}) satisfying the following condition
$$1<a^{-} \leq \frac{a(t) t}{A(t)} \leq a^{+}<a^{-}_{*}:=\frac{Na^{-}}{N-sa^{-}}<+\infty \quad \textnormal{ for all }\quad t>0$$
and $$\displaystyle \int_0^1 \frac{A^{-1}(\tau)}{\tau^{\frac{N+s}{N}}} d \tau<+\infty \quad \textnormal{ and} \quad \displaystyle\int_1^{+\infty} \frac{A^{-1}(\tau)}{\tau^{\frac{N+s}{N}}} d \tau=+\infty.$$
It is clear that the generalized N-function $G_{x, y}$ satisfies the assumptions $(\hyperref[cond:G1]{g_{1}})-(\hyperref[cond:G1]{g_{4}})$ and $G_{x,y}\in \mathcal{B}_{f}$.
\end{itemize}
Notice that these particular cases have applications in several fields of physics and mathematics, for example, filtration of fluids in porous media, restricted heating, elastoplasticity, image processing, optimal control and financial mathematics, (see for instance \cite{aberqi2022new,dlhphp2011,BjSa2019}).
\begin{definition}
Let $G_{x,y}$ be a generalized N-function. The function $\widetilde{G}: \overline{\Omega} \times \overline{\Omega} \times [0,+\infty) \rightarrow\mathbb{R}$ defined by
\begin{equation}\label{Gconjugate}
\widetilde{G}_{x, y}(t)=\widetilde{G}(x, y, t):=\sup _{s \geq 0}\left(t s-G_{x, y}(s)\right) \quad  \forall  (x, y) \in \overline{\Omega} \times \overline{\Omega}, \quad  \forall t \geqslant 0
\end{equation}
is called the conjugate of $G$ in the sense of Young.
\end{definition}

The assumptions $(\hyperref[cond:G1]{g_{1}})-(\hyperref[cond:G1]{g_{4}})$ ensure that $\widetilde{G}_{x,y}$ is a generalized $N$-function (see \cite{phph2019}) and satisfies the $\Delta_2$-condition (see \cite{fukagai2006positive}). Moreover, we have  $\displaystyle\widetilde{g}^{-} \leq \frac{\widetilde{\widehat{g}}(x, t) t}{\widehat{G}(x, t)} \leq \widetilde{g}^{+}$, for all $x \in \overline{\Omega}$ and all $t>0$ where $\displaystyle\widetilde{g}^{-}=\frac{g^{+}}{g^{+}-1}$ and $\displaystyle\widetilde{g}^{+}=\frac{g^{-}}{g^{-}-1}$.

 In view of \eqref{Gconjugate} we have the following Young's type inequality:
\begin{equation}\label{ineqYong}
\sigma \tau \leq G_{x, y}(\sigma)+\widetilde{G}_{x, y}(\tau), \text { for all }(x, y) \in \overline{\Omega} \times \overline{\Omega} \text { and } \sigma, \tau \geq 0.
\end{equation}

\subsection{Main results}
\begin{definition}
 A function $u$ is called a weak solution to problem \eqref{Prob1} if $u \in W^{s(.,.),G_{x,y}}(\Omega)$ such that $u>0$ in $\Omega$ and

\begin{equation}\label{Eqt1}
\int_{\Omega}\int_{\Omega} a_{x,y}\left(\left|D_{s(.,.)}u\right|\right) D_{s(.,.)}u D_{s(.,.)}v d \mu+\int_{\Omega} \widehat{a}_{x}(|u|) u v d x=\int_{\Omega} \frac{h(x)}{u^{m(x)}} v d x
\end{equation}
for all $v \in W^{s(.,.),G_{x,y}}(\Omega)$.
\end{definition}
Our main result reads as follows.
\begin{theorem}\label{Theopr}

Let $\Omega \subset \mathbb{R}^N$ be a bounded domain with Lipschitz boundary $\partial \Omega$. Suppose that $(\hyperref[cond:G1]{g_{1}})-(\hyperref[cond:G1]{g_{4}})$ and \eqref{ConditionN} hold and $G_{x,y}\in \mathcal{B}_{f}$. Then the problem \eqref{Prob1} has a nontrivial weak solution $u_{0}\in W^{s(.,.),G_{x, y}}(\Omega)$.
\end{theorem}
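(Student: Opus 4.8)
The plan is to solve \eqref{Prob1} by the direct method of the calculus of variations, applied to the energy obtained by absorbing the singular term into a potential, keeping in mind throughout that this energy is not of class $C^{1}$.

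\emph{Step 1 (the minimization problem).} On $X:=W^{s(.,.),G_{x,y}}(\Omega)$ I would work with
\[
\Phi(u)=\int_{\Omega}\int_{\Omega}G_{x,y}(D_{s(.,.)}u)\,d\mu+\int_{\Omega}\widehat{G}_{x}(|u|)\,dx-\int_{\Omega}\frac{h(x)}{1-m(x)}(u^{+})^{1-m(x)}\,dx .
\]
First I would recall that $X$ is a reflexive Banach space and check that $\Phi$ is well defined, coercive and sequentially weakly lower semicontinuous. For coercivity: by the generalized Hölder inequality in variable exponents, with the conjugate pair fixed by $h\in L^{\frac{r(x)}{r(x)+m(x)-1}}(\Omega)$, and by the continuous embedding $X\hookrightarrow L^{r(x)}(\Omega)$, the last term is at most $C(1+\|u\|_{X}^{\,1-m^{-}})$ where $m^{-}=\inf_{\overline{\Omega}}m$; by the modular--norm inequalities and $(g_{4})$ the sum of the first two terms is at least $\|u\|_{X}^{\,g^{-}}$ once $\|u\|_{X}\ge1$; since $1-m^{-}<1<g^{-}$, $\Phi$ is coercive. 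Sequential weak lower semicontinuity holds because the modular part is convex and strongly continuous, hence weakly l.s.c., while the singular term is sequentially weakly continuous thanks to the compact embedding $X\hookrightarrow\hookrightarrow L^{r(x)}(\Omega)$ furnished by \eqref{ConditionN} together with a dominated convergence argument. Hence $\Phi$ attains its infimum at some $u_{0}\in X$.

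\emph{Step 2 (sign, nontriviality, negative energy).} From $|D_{s(.,.)}u^{+}|\le|D_{s(.,.)}u|$ and $|u^{+}|\le|u|$ pointwise, and the strict monotonicity and positivity of $G_{x,y}$ and $\widehat{G}_{x}$ on $(0,+\infty)$, one gets $\Phi(u_{0}^{+})\le\Phi(u_{0})$; minimality forces equality, which, since the last term of $\Phi$ depends only on $u^{+}$, propagates to the first two terms and then, by strict monotonicity of $\widehat{G}_{x}$, forces $|u_{0}^{+}|=|u_{0}|$, i.e. $u_{0}\ge0$ a.e. For nontriviality, fix $\phi\in X$ with $\phi\ge0$ and $\phi>0$ on a set of positive measure; for $t\in(0,1)$ the $\Delta_{2}$-type estimates give $\Phi(t\phi)\le C_{1}t^{g^{-}}-C_{2}t^{1-m^{-}}$ with $C_{2}>0$, which is negative for $t$ small because $g^{-}>1-m^{-}$. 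Thus $\Phi(u_{0})=\inf_{X}\Phi<0=\Phi(0)$, so $u_{0}\not\equiv0$ and the energy level is negative.

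\emph{Step 3 (strict positivity).} For $v\in X$ with $v\ge0$ and $t>0$, minimality gives $\Phi(u_{0}+tv)\ge\Phi(u_{0})$; since $u_{0},v\ge0$ the positive-part truncation is inactive, so dividing by $t$ and letting $t\to0^{+}$ — the modular part being $C^{1}$, and the difference quotients $t^{-1}[(u_{0}+tv)^{1-m(x)}-u_{0}^{1-m(x)}]$ being increasing as $t\downarrow0$ by concavity of $\tau\mapsto\tau^{1-m(x)}$, so that monotone convergence applies — I obtain
\[
\int_{\Omega}\int_{\Omega}a_{x,y}(|D_{s(.,.)}u_{0}|)D_{s(.,.)}u_{0}\,D_{s(.,.)}v\,d\mu+\int_{\Omega}\widehat{a}_{x}(|u_{0}|)u_{0}v\,dx\ \ge\ \int_{\Omega}\frac{h(x)}{u_{0}^{m(x)}}v\,dx ,
\]
the right-hand side being finite. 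Taking $v$ strictly positive a.e. in $\Omega$, the limiting integrand equals $+\infty$ on $\{u_{0}=0\}$, so finiteness forces $|\{u_{0}=0\}|=0$, i.e. $u_{0}>0$ a.e. in $\Omega$.

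\emph{Step 4 (the Euler--Lagrange identity; main obstacle).} It remains to upgrade the one-sided inequality of Step 3 to the identity \eqref{Eqt1} for every $v\in X$ — the right-hand side there being well defined, since $\int_{\Omega}h\,u_{0}^{-m(x)}v^{\pm}\,dx<\infty$ by Step 3 applied to $v^{+}$ and $v^{-}$. For $\varphi\in X$ with $\varphi\ge0$ and $\varepsilon>0$ small, I would test minimality against $w_{\varepsilon}=(u_{0}-\varepsilon\varphi)^{+}$: writing $w_{\varepsilon}=u_{0}-\varepsilon\varphi+(u_{0}-\varepsilon\varphi)^{-}$, inserting this into $\Phi(w_{\varepsilon})\ge\Phi(u_{0})$, dividing by $\varepsilon$ and letting $\varepsilon\to0$, the remainder $(u_{0}-\varepsilon\varphi)^{-}$ is concentrated on $\{u_{0}<\varepsilon\varphi\}$, a set of measure tending to $0$ because $u_{0}>0$ a.e.; once the corresponding error terms are shown to be negligible, one gets the reverse inequality, hence equality for $\varphi\ge0$, and linearity yields \eqref{Eqt1} for all $v\in X$. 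I expect this step to be the main obstacle: $\Phi$ is not Gâteaux differentiable at $u_{0}$ along directions uncontrolled near $\{u_{0}=0\}$, so the identity cannot be obtained by plain differentiation, and the delicate point is precisely to prove that the contributions of $(u_{0}-\varepsilon\varphi)^{-}$ to the nonlocal (Musielak) modular part and to the singular part are $o(\varepsilon)$, which relies on the strict positivity of $u_{0}$, the $\Delta_{2}$-conditions for $G_{x,y}$ and $\widetilde{G}_{x,y}$, the absolute continuity of the integral, and the behaviour of the truncation operators on $X$. An alternative is to regularize the datum to $h(x)(u^{+}+\tfrac{1}{n})^{-m(x)}$, solve each regularized problem by minimizing a $C^{1}$ functional, derive uniform a priori bounds together with a uniform positive lower bound on compact subsets of $\Omega$ (via a comparison principle and the strong maximum principle), and pass to the limit using the $(S_{+})$-property of the principal operator; there the delicate points are the same.
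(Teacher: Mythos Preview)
Your proposal is correct and follows the same overall route as the paper: minimize the associated energy by the direct method, extract a nonnegative nontrivial minimizer with negative energy, derive the one-sided variational inequality for nonnegative test directions, and then upgrade to the full Euler--Lagrange identity via a truncation argument on $(u_{0}\pm\varepsilon\varphi)^{+}$.

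Two tactical differences are worth recording. For strict positivity you argue directly that the finiteness of $\int_{\Omega} h\,u_{0}^{-m(x)}v\,dx$ for some $v>0$ a.e.\ forces $|\{u_{0}=0\}|=0$; the paper instead reads the one-sided inequality as $(-\Delta)^{s(.,.)}_{G_{x,y}}u_{0}+\widehat a_{x}(|u_{0}|)u_{0}\ge 0$ weakly and invokes a strong maximum principle. Your argument is more self-contained here, since a strong maximum principle for this variable-order Musielak operator is not established or cited in the paper. For Step~4 the paper does not return to minimality: it first records the scaling identity obtained from $\gamma(t)=J((1+t)u_{0})$, namely $\langle\Psi'(u_{0}),u_{0}\rangle=\int_{\Omega}h\,u_{0}^{1-m(x)}\,dx$, then inserts the nonnegative test function $(u_{0}+\varepsilon\varphi)^{+}$, for \emph{arbitrary} $\varphi\in X$, into the already-proved inequality \eqref{Eqt2}, uses the scaling identity to cancel the zeroth-order terms, drops the sign-definite remainders supported on $\Omega_{\varepsilon}=\{u_{0}+\varepsilon\varphi<0\}$, and divides by $\varepsilon$. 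This scaling identity is precisely the device that lets the paper avoid a direct $o(\varepsilon)$ analysis of the singular remainder; your plan of controlling the contribution of $(u_{0}-\varepsilon\varphi)^{-}$ is an equivalent (and standard) variant, and your identification of this as the delicate step is accurate in either formulation.
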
																										
This paper is organized as follows.  In Section \ref{sec2}, we state some fundamental properties of the generalized N-functions, Musielak-Orlicz spaces and fractional Musielak-Sobolev spaces with variable order. In Section \ref{sec3} we give the proof of the main results.

\section{ Some preliminary results}\label{sec2}
In this section we give some definitions and properties for the fractional Musielak-Sobolev spaces with variable order, as well as some preliminary lemmas, which will be used in the sequel.

\subsection{Musielak-Orlicz spaces}
Let $G_{x,y}$ be a generalized N-function. In correspondence to $\widehat{G}_x=G_{x,x}$ and an open subset $\Omega$ of $\mathbb{R}^{N}$, we define the Musielak class 
$$
K^{\widehat{G}_x}(\Omega)=\left\{u: \Omega \longrightarrow \mathbb{R} \text { mesurable : } \int_{\Omega} \widehat{G}_x(|u(x)|) \mathrm{d} x<+\infty\right\} \text {, }
$$
and we recall the Musielak-Orlicz space
$$
L^{\widehat{G}_x}(\Omega)=\left\{u: \Omega \longrightarrow \mathbb{R} \text { mesurable : } J_{\widehat{G}_x}(\lambda u)<+\infty \text { for some } \lambda>0\right\},
$$
where $$J_{\widehat{G}_x}( u):=\displaystyle\int_{\Omega} \widehat{G}_x(|u(x)|) \mathrm{d} x.$$\\
The space $L^{\widehat{G}_x}(\Omega)$ is endowed with the Luxemburg norm
$$
\|u\|_{L^{\widehat{G}_{x}}(\Omega)}=\inf \left\{\lambda>0: J_{\widehat{G}_x}\left(\frac{u}{\lambda} \right)  \leqslant 1\right\} .
$$
 Under the assumptions $(\hyperref[cond:G1]{g_{1}})-(\hyperref[cond:G1]{g_{4}})$, the space $\left(L^{\widehat{G}_{x}}(\Omega),\|u\|_{L^{\widehat{G}_{x}}(\Omega)}\right)$ is a separable and reflexive Banach space (see \cite[Remark B.1]{ayya2020someapp} and \cite[Theorem 3.3.7 and Corollary 3.6.7]{phph2019}). The relation \eqref{1.6eq2} implies that $ L^{\widehat{G}_x}(\Omega)=K^{\widehat{G}_x}(\Omega)$ (see \cite{musju1983Orlicz}).

As a consequence of \eqref{ineqYong}, we have the following Hölder's type inequality:
\begin{lemma}
 Let $\Omega$ be an open subset of $\mathbb{R}^N$. Let $\widehat{G}_x$ be a generalized $N$-function and $\widetilde{\widehat{G}}_x$ its conjugate function, then
$$
\displaystyle\left|\int_{\Omega} u v d x\right| \leq 2\|u\|_{L^{\widehat{G}_x}(\Omega)}\|v\|_{L^{\widetilde{\widehat{G}}_{x}}(\Omega)},
$$
for all $u \in L^{\widehat{G}_x}(\Omega)$ and all $ v \in L^{\widetilde{\widehat{G}}_{x}}(\Omega).$
\end{lemma}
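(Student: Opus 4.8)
The plan is to obtain the bound directly from Young's type inequality \eqref{ineqYong}, specialised to the diagonal $y=x$, after rescaling $u$ and $v$ by their Luxemburg norms. We may assume $u\not\equiv0$ and $v\not\equiv0$, since otherwise both sides vanish. Set $\alpha:=\|u\|_{L^{\widehat{G}_x}(\Omega)}$ and $\beta:=\|v\|_{L^{\widetilde{\widehat{G}}_{x}}(\Omega)}$; these are finite because $u\in L^{\widehat{G}_x}(\Omega)$ and $v\in L^{\widetilde{\widehat{G}}_{x}}(\Omega)$, and strictly positive because $u,v\not\equiv0$ and $\widehat{G}_x(t),\widetilde{\widehat{G}}_x(t)>0$ for $t>0$. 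The first step is the norm-modular relation
$$
J_{\widehat{G}_x}\!\left(\frac{u}{\alpha}\right)\le 1\qquad\text{and}\qquad J_{\widetilde{\widehat{G}}_x}\!\left(\frac{v}{\beta}\right)\le 1,
$$
which I would establish by a monotone convergence argument: since $\widehat{G}_x$ is increasing, $\lambda\mapsto J_{\widehat{G}_x}(u/\lambda)$ is non-increasing, so picking $\lambda_n\downarrow\alpha$ with $J_{\widehat{G}_x}(u/\lambda_n)\le1$ and noting $\widehat{G}_x(|u|/\lambda_n)\uparrow\widehat{G}_x(|u|/\alpha)$ pointwise (by continuity of $\widehat{G}_x$), the monotone convergence theorem yields $J_{\widehat{G}_x}(u/\alpha)\le1$, and similarly for $v$. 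Here I also use that $\widetilde{G}_{x,x}=\widetilde{\widehat{G}}_x$, which is immediate from the pointwise definition \eqref{Gconjugate}, so that $\widetilde{\widehat{G}}_x$ is precisely the generalized N-function conjugate to $\widehat{G}_x$ and \eqref{ineqYong} with $y=x$ reads $\sigma\tau\le\widehat{G}_x(\sigma)+\widetilde{\widehat{G}}_x(\tau)$ for $\sigma,\tau\ge0$.

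With these preliminaries, I would apply the diagonal Young inequality pointwise with $\sigma=|u(x)|/\alpha$ and $\tau=|v(x)|/\beta$ and integrate over $\Omega$:
$$
\frac{1}{\alpha\beta}\int_\Omega|u(x)\,v(x)|\,dx\ \le\ J_{\widehat{G}_x}\!\left(\frac{u}{\alpha}\right)+J_{\widetilde{\widehat{G}}_x}\!\left(\frac{v}{\beta}\right)\ \le\ 2,
$$
so that $\left|\int_\Omega uv\,dx\right|\le\int_\Omega|uv|\,dx\le 2\alpha\beta=2\|u\|_{L^{\widehat{G}_x}(\Omega)}\|v\|_{L^{\widetilde{\widehat{G}}_{x}}(\Omega)}$, which is the claim. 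There is no serious obstacle in this argument; the only point deserving care is the norm-modular inequality $J_{\widehat{G}_x}(u/\|u\|_{L^{\widehat{G}_x}(\Omega)})\le1$, which as explained follows from monotone convergence and the continuity of the generalized N-function alone — the $\Delta_2$-condition is not needed for it.
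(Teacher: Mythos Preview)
Your argument is correct and is precisely the standard derivation the paper has in mind: the lemma is stated as a direct consequence of the Young-type inequality \eqref{ineqYong}, and your normalization by the Luxemburg norms followed by pointwise application of \eqref{ineqYong} on the diagonal and the modular bound $J_{\widehat{G}_x}(u/\|u\|)\le 1$ is exactly that.
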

\begin{lemma}[{\textnormal{see} \cite{phph2019}}]\label{Lemm2} Let $\widehat{G}_x$ be a generalized $N$-function. Then, we have $$\|u\|_{L^{\widehat{G}_x}(\Omega)} \leq J_{\widehat{G}_x}(u)+1, \quad$$ for all $u \in L^{\widehat{G}_x}(\Omega)$.
\end{lemma}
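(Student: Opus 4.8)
The plan is to dispose of the trivial case $u=0$ and then normalise. If $u=0$ the inequality is immediate since $\|0\|_{L^{\widehat G_x}(\Omega)}=0\le 1$. So assume $u\ne 0$ and set $\lambda:=J_{\widehat G_x}(u)+1$; note $\lambda\ge 1$. The goal is to show $\|u\|_{L^{\widehat G_x}(\Omega)}\le\lambda$, and by the definition of the Luxemburg norm as an infimum it suffices to prove that $J_{\widehat G_x}(u/\lambda)\le 1$.

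The key step is convexity of $t\mapsto\widehat G_x(t)$ (property (i), which holds for $\widehat G_x=G_{x,x}$ as a generalized $N$-function), combined with $\widehat G_x(0)=0$. For $\lambda\ge 1$ we have $1/\lambda\in(0,1]$, so for each fixed $x$,
\[
\widehat G_x\!\left(\frac{|u(x)|}{\lambda}\right)=\widehat G_x\!\left(\frac{1}{\lambda}\,|u(x)|+\Bigl(1-\frac1\lambda\Bigr)\cdot 0\right)\le \frac{1}{\lambda}\,\widehat G_x(|u(x)|)+\Bigl(1-\frac1\lambda\Bigr)\widehat G_x(0)=\frac{1}{\lambda}\,\widehat G_x(|u(x)|).
\]
Integrating over $\Omega$ gives $J_{\widehat G_x}(u/\lambda)\le \frac{1}{\lambda}J_{\widehat G_x}(u)$. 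Since $\lambda=J_{\widehat G_x}(u)+1>J_{\widehat G_x}(u)$, we get $J_{\widehat G_x}(u/\lambda)\le J_{\widehat G_x}(u)/\lambda<1\le 1$, hence $\|u\|_{L^{\widehat G_x}(\Omega)}\le\lambda=J_{\widehat G_x}(u)+1$, which is the claim.

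One point to check carefully is that $J_{\widehat G_x}(u)$ is finite, i.e.\ that the bound is not vacuous: this is exactly the content of the identity $L^{\widehat G_x}(\Omega)=K^{\widehat G_x}(\Omega)$ recorded in the excerpt (a consequence of the $\Delta_2$-condition \eqref{1.6eq2}), so $u\in L^{\widehat G_x}(\Omega)$ forces $\lambda=J_{\widehat G_x}(u)+1<+\infty$. The only mild subtlety — and the place where a fully rigorous write-up must be slightly careful — is the measurability of $x\mapsto\widehat G_x(|u(x)|)$ needed for the integral $J_{\widehat G_x}(u)$ to make sense; this follows from the Carathéodory structure of $G$ in Definition \ref{Def1.1} ($\widehat G_x(t)$ measurable in $x$, continuous in $t$) together with measurability of $u$. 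No deep obstacle arises: the proof is essentially the standard modular-versus-Luxemburg-norm estimate, with convexity and $\widehat G_x(0)=0$ doing all the work.
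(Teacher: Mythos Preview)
Your proof is correct and is exactly the standard convexity argument for this modular--norm inequality. Note that the paper does not actually supply a proof of this lemma: it simply cites \cite{phph2019}, so there is no in-paper argument to compare against; your write-up is essentially the one found in that reference.
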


\subsection{$s(.,.)$-Fractional Musielak-Sobolev spaces}
Let $G_{x,y}$ be a generalized N-function and $\Omega$ an open subset of $\mathbb{R}^{N}$. We recall the fractional Musielak-Sobolev space with variable order
$$
\begin{aligned}
&W^{s(.,.),G_{x, y}}(\Omega):=\left\{u \in L^{\widehat{G}_x}(\Omega): J_{s(.,.),G_{x, y}}( \lambda u)<+\infty \quad\textnormal{ for some } \lambda>0 \right\},
\end{aligned}
$$
where  $$J_{s(.,.),G_{x, y}}(u):= \int_{\Omega} \int_{\Omega} G_{x, y}\left(D_{s(.,.)}u\right)d\mu,$$
with $D_{s(.,.)}u:=D_{s(.,.)}u(x,y):=\displaystyle\frac{u(x)-u(y)}{|x-y|^{s(x,y)}}$ and $d\mu:=\displaystyle\frac{\mathrm{d} x \mathrm{~d} y}{|x-y|^{N}}.$\\
It is well known that $d\mu$ is a regular Borel measure on the set $\Omega\times\Omega$.

The space $W^{s(.,.),G_{x, y}}(\Omega)$ is endowed with the norm
\begin{equation}\label{Norm}
\|u\|_{W^{s(.,.),G_{x, y}}(\Omega)}:=\|u\|_{L^{\widehat{G}_{x}}(\Omega)}+[u]_{s(.,.), G_{x, y}},
\end{equation}
where $[.]_{s(.,.), G_{x, y}}$ is the so called Gagliardo seminorm defined by
$$
[u]_{s(.,.), G_{x, y}}=\inf \left\{\lambda>0: J_{s(.,.),G_{x, y}}\left( \frac{u}{\lambda}\right)  \leqslant 1\right\} .
$$

\begin{remark}[{\textnormal{see} \cite{srtM2024eignv}}] Since the assumption $(g_{4})$ implies that the function $G_{x, y}$ and $\widetilde{G}_{x, y}$ satisfy the $\Delta_{2}$-condition, then the space $W^{s(.,.),G_{x, y}}(\Omega)$ is a reflexive and separable Banach space. Moreover, if $t\rightarrow G_{x, y}(\sqrt{t})$ is convex on $[0,+\infty)$, then the space $W^{s(.,.),G_{x, y}}(\Omega)$ is a uniformly convex space.
\end{remark}
Now, let us recall the following technical and important results.
\begin{lemma}[{\cite[Lemma 2.2]{abss2022class}}]\label{lemmapro}
Suppose that the assumptions $(\hyperref[cond:G1]{g_{1}})-(\hyperref[cond:G1]{g_{4}})$ hold. Then, the function $G_{x, y}$ satisfies the following inequalities  $:$
\begin{equation}
G_{x, y}(\delta t) \geqslant \delta^{g^{-}} G_{x, y}(t) \quad \quad \forall t>0,\quad \forall\delta>1,
\end{equation}
\begin{equation}
G_{x, y}(\delta t) \geqslant \delta^{g^{+}} G_{x, y}(t),\quad \forall t>0, \quad \forall\delta \in(0,1),
\end{equation}
\begin{equation}
G_{x, y}(\delta t) \leqslant \delta^{g^{+}} G_{x, y}(t) \quad \forall t>0,\quad \forall\delta>1,
\end{equation}
\begin{equation}
G_{x, y}(\delta t) \leqslant \delta^{g^{-}} G_{x, y}\left(t\right)\quad \forall t>0,\quad \forall \delta \in(0,1).
\end{equation}
\end{lemma}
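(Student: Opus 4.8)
The plan is to fix an arbitrary pair $(x,y)\in\overline{\Omega}\times\overline{\Omega}$ and an arbitrary $t>0$, and to deduce all four estimates from a single first-order differential inequality for the one-variable function $\delta\mapsto G_{x,y}(\delta t)$. Since $(g_{2})$ guarantees that $\tau\mapsto g(x,y,\tau)=a_{x,y}(\tau)\tau$ is continuous on $(0,+\infty)$, the fundamental theorem of calculus applied to \eqref{eq1} shows that $\tau\mapsto G_{x,y}(\tau)$ is of class $C^{1}$ on $(0,+\infty)$ with $G_{x,y}'(\tau)=a_{x,y}(\tau)\tau$; moreover $G_{x,y}(\tau)>0$ for every $\tau>0$ by property~(iv). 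Consequently \eqref{eq1.5} can be rewritten in the compact form
\[
g^{-}\;\leqslant\;\frac{\tau\,G_{x,y}'(\tau)}{G_{x,y}(\tau)}\;\leqslant\;g^{+}\qquad\text{for all }\;\tau>0 .
\]

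Next I would introduce the auxiliary function $\psi(\delta):=\ln G_{x,y}(\delta t)$, which is well defined and $C^{1}$ on $(0,+\infty)$ thanks to the positivity of $G_{x,y}$ there. Differentiating and rewriting $\dfrac{t\,G_{x,y}'(\delta t)}{G_{x,y}(\delta t)}=\dfrac{1}{\delta}\cdot\dfrac{(\delta t)\,G_{x,y}'(\delta t)}{G_{x,y}(\delta t)}$, the rewritten form of $(g_{4})$ evaluated at $\tau=\delta t$ gives
\[
\frac{g^{-}}{\delta}\;\leqslant\;\psi'(\delta)\;\leqslant\;\frac{g^{+}}{\delta}\qquad\text{for all }\;\delta>0 .
\]

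It then remains to integrate this inequality over the appropriate interval in each regime. For $\delta>1$, integrating over $[1,\delta]$ yields $g^{-}\ln\delta\leqslant\psi(\delta)-\psi(1)\leqslant g^{+}\ln\delta$, and exponentiating gives $\delta^{g^{-}}G_{x,y}(t)\leqslant G_{x,y}(\delta t)\leqslant\delta^{g^{+}}G_{x,y}(t)$, which are precisely the first and third inequalities. For $\delta\in(0,1)$, integrating over $[\delta,1]$ yields $g^{-}\ln(1/\delta)\leqslant\psi(1)-\psi(\delta)\leqslant g^{+}\ln(1/\delta)$, and exponentiating gives $\delta^{-g^{-}}\leqslant\dfrac{G_{x,y}(t)}{G_{x,y}(\delta t)}\leqslant\delta^{-g^{+}}$, equivalently $\delta^{g^{+}}G_{x,y}(t)\leqslant G_{x,y}(\delta t)\leqslant\delta^{g^{-}}G_{x,y}(t)$, which are the second and fourth inequalities. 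Since $(x,y)$ and $t$ were arbitrary, this proves the lemma.

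I do not expect any genuine obstacle in this argument: the only points that need a line of justification are the $C^{1}$ regularity of $G_{x,y}$ on $(0,+\infty)$ and the strict positivity $G_{x,y}(\tau)>0$ for $\tau>0$, which together make the logarithm and the divisions legitimate, and both are immediate from the standing hypotheses (the fundamental theorem of calculus with $(g_{2})$, and property~(iv)). If one wishes to avoid logarithms entirely, the same computation can be repackaged by checking directly that $\delta\mapsto\delta^{-g^{-}}G_{x,y}(\delta t)$ is nondecreasing and $\delta\mapsto\delta^{-g^{+}}G_{x,y}(\delta t)$ is nonincreasing on $(0,+\infty)$ — their derivatives at $\delta$ have the sign of $(\delta t)G_{x,y}'(\delta t)-g^{\mp}G_{x,y}(\delta t)$, which is controlled by $(g_{4})$ — and then comparing the values of these monotone functions at a general $\delta$ and at $\delta=1$.
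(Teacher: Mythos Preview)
Your argument is correct and is the standard one: rewriting $(g_{4})$ as a two-sided bound on the logarithmic derivative of $\tau\mapsto G_{x,y}(\tau)$ and integrating between $1$ and $\delta$ is exactly how these growth estimates are obtained in the literature. The present paper does not supply its own proof of this lemma; it simply cites \cite[Lemma~2.2]{abss2022class}, whose proof proceeds along the same lines as yours. So there is nothing to compare against here beyond confirming that you have reproduced the expected argument.
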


\begin{theorem}\label{theo4}
Let $\Omega$ be an open subset of $\mathbb{R}^N$. Assume that $G_{x,y}\in \mathcal{B}_{f}$. Then, $C_0^{2}(\Omega)\subset W^{s(.,.),G_{x,y}}(\Omega)$.
\end{theorem}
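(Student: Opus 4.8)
The plan is to fix an arbitrary $u\in C_0^{2}(\Omega)$, set $K:=\mathrm{supp}\,u\subset\Omega$ (compact), $M:=\|u\|_{L^{\infty}(\Omega)}$ and $L:=\|\nabla u\|_{L^{\infty}(\Omega)}$, and to verify the two defining properties of $W^{s(.,.),G_{x,y}}(\Omega)$: that $u\in L^{\widehat{G}_x}(\Omega)$ and that $J_{s(.,.),G_{x,y}}(\lambda u)<+\infty$ for some $\lambda>0$; in fact $\lambda=1$ will do. For the first property, since $\widehat{G}_x(1)=G_{x,x}(1)\le C_{2}$ by $G_{x,y}\in\mathcal{B}_{f}$, Lemma~\ref{lemmapro} gives $\widehat{G}_x(|u(x)|)\le\max\{M^{g^{-}},M^{g^{+}}\}\,\widehat{G}_x(1)\le C_{2}\max\{M^{g^{-}},M^{g^{+}}\}$ for every $x\in\overline{\Omega}$, whence $J_{\widehat{G}_x}(u)\le C_{2}\max\{M^{g^{-}},M^{g^{+}}\}\,|K|<+\infty$, and therefore $u\in K^{\widehat{G}_x}(\Omega)=L^{\widehat{G}_x}(\Omega)$.

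For the double modular, observe that the integrand $G_{x,y}(D_{s(.,.)}u)=G_{x,y}(|D_{s(.,.)}u|)$ (using that $G_{x,y}$ is even) vanishes whenever $x\notin K$ and $y\notin K$, so by the symmetry of $d\mu$ and of $G$,
\[
J_{s(.,.),G_{x,y}}(u)\le 2\int_{K}\Bigl(\int_{\Omega}G_{x,y}\bigl(|D_{s(.,.)}u(x,y)|\bigr)\,\frac{dy}{|x-y|^{N}}\Bigr)dx,
\]
and it suffices to bound the inner integral uniformly for $x\in K$. I would split it according to $\rho:=|x-y|$ into a diagonal zone $\rho<\delta_{0}$, an intermediate zone $\delta_{0}\le\rho<R_{0}$, and a tail zone $\rho\ge R_{0}$, where $\delta_{0}\in(0,1)$ is chosen with $L\,\delta_{0}^{\,1-s^{+}}<1$ and $R_{0}\ge 1$ with $2M\,R_{0}^{-s^{-}}<1$. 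In the diagonal zone the mean value theorem gives $|D_{s(.,.)}u(x,y)|\le L\rho^{\,1-s(x,y)}\le L\rho^{\,1-s^{+}}<1$, so Lemma~\ref{lemmapro} together with $\mathcal{B}_{f}$ yields $G_{x,y}(|D_{s(.,.)}u|)\le (L\rho^{\,1-s^{+}})^{g^{-}}G_{x,y}(1)\le C_{2}L^{g^{-}}\rho^{(1-s^{+})g^{-}}$; passing to polar coordinates in $y$, the contribution is controlled by $\int_{0}^{\delta_{0}}\rho^{(1-s^{+})g^{-}-1}\,d\rho<+\infty$ since $(1-s^{+})g^{-}>0$. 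In the tail zone $\rho\ge R_{0}\ge 1$ gives $\rho^{s(x,y)}\ge\rho^{s^{-}}$, hence $|D_{s(.,.)}u|\le 2M\rho^{-s^{-}}<1$ and $G_{x,y}(|D_{s(.,.)}u|)\le C_{2}(2M)^{g^{-}}\rho^{-s^{-}g^{-}}$, so the contribution is controlled by $\int_{R_{0}}^{\infty}\rho^{-s^{-}g^{-}-1}\,d\rho<+\infty$ since $s^{-}g^{-}>0$. In the intermediate zone $\rho$ stays in $[\delta_{0},R_{0})$ and $y$ in a fixed ball, so $|x-y|^{-N}\le\delta_{0}^{-N}$, the quantity $|D_{s(.,.)}u|$ is bounded by a constant ($u$ bounded, $\rho$ bounded below), hence so is $G_{x,y}(|D_{s(.,.)}u|)$ by Lemma~\ref{lemmapro} and $\mathcal{B}_{f}$, and the integral over this bounded set is finite. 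All three bounds are independent of $x\in K$, so integrating over $K$ gives $J_{s(.,.),G_{x,y}}(u)<+\infty$, and thus $u\in W^{s(.,.),G_{x,y}}(\Omega)$.

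The main obstacle lies in the two degenerate zones. Near the diagonal, integrability against the non-integrable kernel $|x-y|^{-N}$ hinges on the H\"older/Lipschitz regularity of $u$, which beats the order $s(x,y)<1$; so $C^{2}$, or in fact merely $C^{0,1}$, is what makes the argument work. In the tail (which matters when $\Omega$ is unbounded), integrability rests on quantitative decay of $G_{x,y}$ at $0$, and this is supplied by the power-type estimates of Lemma~\ref{lemmapro} \emph{combined with} the fractional boundedness condition $G_{x,y}\in\mathcal{B}_{f}$, without which the constant $G_{x,y}(1)$ could be unbounded as $(x,y)$ ranges over $\overline{\Omega}\times\overline{\Omega}$ and the uniform-in-$K$ estimate would collapse.
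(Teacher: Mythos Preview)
Your argument is correct and is precisely the standard route: the paper itself does not spell out a proof but refers to Theorem~2.2 in \cite{abss2022class}, whose method is exactly what you carry out here---use the power-type bounds of Lemma~\ref{lemmapro} together with the $\mathcal{B}_f$ condition to control $G_{x,y}(|D_{s(.,.)}u|)$, reduce by symmetry to $x\in K=\mathrm{supp}\,u$, and split the $y$-integral into a diagonal part (handled by the Lipschitz bound and $(1-s^{+})g^{-}>0$) and a tail part (handled by the decay $\rho^{-s^{-}g^{-}}$). The only adaptation needed over the constant-order case is replacing $s$ by $s^{+}$ near the diagonal and by $s^{-}$ at infinity, which you do correctly.
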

 The proof of the previous theorem is similar to the one of Theorem 2.2 in \cite{abss2022class}.

\begin{theorem}[{\cite[Theorem 3.1]{biswas2021variable}}]\label{theo3} Let $\Omega$ be a smooth bounded domain in $\mathbb{R}^N$ and $s(.,.)$, $p(.,.)$ satisfy \eqref{S1}-\eqref{S2} and \eqref{P1}-\eqref{P2}, respectively. Let $r: \overline{\Omega} \longrightarrow(1,+\infty)$ be a continuous variable exponent such that

$$
1<r^{-}=\min _{x \in \overline{\Omega}} r(x) \leqslant r(x)<p_s^*(x):=\frac{N \overline{p}(x)}{N-\overline{s}(x) \overline{p}(x)} \quad \text { for all } x \in \overline{\Omega},
$$
where $\overline{p}(x)=p(x,x)$ and $\overline{s}(x)=s(x,x)$. Then, there exists a constant $C=C(N, s, p, r, \Omega)>0$ such that for any $u \in W^{s(.,.), p(x, y)}(\Omega)$,

$$
\|u\|_{L^{r(x)}(\Omega)} \leqslant C\|u\|_{W^{s(.,.), p(x, y)}(\Omega)}.
$$
Thus, the space $W^{s(.,.), p(x, y)}(\Omega)$ is continuously embedded in $L^{r(x)}(\Omega)$ with $r(x) \in\left(1, p_s^*(x)\right)$, for all $x \in \overline{\Omega}$. Moreover, this embedding is compact.
\end{theorem}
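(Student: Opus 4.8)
The plan is to deduce this variable-order, variable-exponent embedding from the classical fractional Sobolev embedding with \emph{constant} parameters by a finite localisation, and then to upgrade continuity to compactness by almost-everywhere convergence together with uniform integrability. I note first that, by the definition of the norm, $\|u\|_{L^{\overline{p}(x)}(\Omega)}\le\|u\|_{W^{s(.,.),p(x,y)}(\Omega)}$, that $\overline{s}(x)\overline{p}(x)\le s^{+}p^{+}<N$ by \eqref{P1}--\eqref{P2}, and that $x\mapsto p_s^*(x)-r(x)$ is continuous and strictly positive on the compact set $\overline{\Omega}$, hence bounded below by some $c_0>0$. Using the uniform continuity of $p,s$ on $\overline{\Omega}\times\overline{\Omega}$ and of $r$ on $\overline{\Omega}$, I would cover $\overline{\Omega}$ by finitely many balls $B_i=B(x_i,\varepsilon_i)$, $i=1,\dots,k$, with $\varepsilon_i$ so small that $\Omega_i:=\Omega\cap B_i$ is a bounded Lipschitz domain (possible since $\partial\Omega$ is smooth) and there exist \emph{constants} $\sigma_i\in(0,\inf_{B_i\times B_i}s)$ and $\pi_i\in(1,\inf_{B_i\times B_i}p)$ with $\sigma_i\pi_i<N$ and $(\pi_i)^{*}_{\sigma_i}:=\frac{N\pi_i}{N-\sigma_i\pi_i}>\sup_{\overline{\Omega}\cap B_i}r$. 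This last inequality is exactly what the strict bound $r(x_i)<p_s^*(x_i)$ buys: $(\pi_i)^{*}_{\sigma_i}\to p_s^*(x_i)$ as $\pi_i\uparrow\overline{p}(x_i)$, $\sigma_i\uparrow\overline{s}(x_i)$, while $\sup_{B_i}r\to r(x_i)$ as $\varepsilon_i\to0$.

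The technical heart, and the step I expect to be the main obstacle, is the local comparison with constant parameters: $W^{s(.,.),p(x,y)}(\Omega)|_{\Omega_i}\hookrightarrow W^{\sigma_i,\pi_i}(\Omega_i)$ continuously. To prove it I would split the Gagliardo double integral defining $[u]_{\sigma_i,\pi_i,\Omega_i}^{\pi_i}$ over $\Omega_i\times\Omega_i$ into the regions $\{|x-y|\ge1\}$ and $\{|x-y|<1\}$. On the first region Young's inequality (valid since $\pi_i<p^{-}\le\overline{p}$) gives $|u(x)-u(y)|^{\pi_i}\le C(1+|u(x)|^{\overline{p}(x)}+|u(y)|^{\overline{p}(y)})$, and $\int_{\Omega_i}|x-y|^{-(N+\sigma_i\pi_i)}\,dy<\infty$ because $\Omega$ is bounded, so this part is controlled by $|\Omega|+\int_\Omega|u|^{\overline{p}(x)}\,dx$. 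On the second region I would write
\[
\frac{|u(x)-u(y)|^{\pi_i}}{|x-y|^{N+\sigma_i\pi_i}}=(D_{s(.,.)}u)^{\pi_i}\,|x-y|^{-(N-(s(x,y)-\sigma_i)\pi_i)},
\]
bound the weight by $|x-y|^{-(N-\tau_i)}$ with $\tau_i:=(\inf_{B_i\times B_i}s-\sigma_i)\pi_i>0$ (using $|x-y|<1$ and $\sigma_i<\inf_{B_i\times B_i}s$), and apply the generalised Hölder inequality in variable-exponent Lebesgue spaces with exponent $\alpha(x,y)=p(x,y)/\pi_i>1$: the principal factor then becomes $(D_{s(.,.)}u)^{p(x,y)}|x-y|^{-N}$, i.e.\ the integrand of $\rho_{s(.,.),p(.,.)}$, while the conjugate weight is a power of $|x-y|$ with exponent $-N+\tau_i p(x,y)/(p(x,y)-\pi_i)>-N$, hence locally integrable. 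This bounds $[u]_{\sigma_i,\pi_i,\Omega_i}$ by a continuous function of $\rho_{s(.,.),p(.,.)}(u)$ and $\int_\Omega|u|^{\overline{p}(x)}\,dx$, which, by homogeneity, yields the desired local continuous embedding. The delicate point is precisely the bookkeeping that keeps every exponent of $|x-y|$ produced by these manipulations in the locally integrable range, and this is where the strict inequalities $\sigma_i<\inf_{B_i\times B_i}s$ and $\pi_i<\inf_{B_i\times B_i}p$ are used.

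Granting the comparison, the continuous embedding follows quickly: on each Lipschitz domain $\Omega_i$ with $\sigma_i\pi_i<N$ the classical constant-order fractional Sobolev embedding (Di Nezza--Palatucci--Valdinoci) gives $W^{\sigma_i,\pi_i}(\Omega_i)\hookrightarrow L^{(\pi_i)^{*}_{\sigma_i}}(\Omega_i)$, and since $(\pi_i)^{*}_{\sigma_i}>\sup_{\Omega_i}r$ and $|\Omega_i|<\infty$ one has $L^{(\pi_i)^{*}_{\sigma_i}}(\Omega_i)\hookrightarrow L^{r(x)}(\Omega_i)$; composing with the local comparison and summing over a disjoint family $\{E_i\}$, $E_i\subset\Omega_i$, covering $\Omega$, the triangle inequality for the Luxemburg norm gives $\|u\|_{L^{r(x)}(\Omega)}\le\sum_i\|u\|_{L^{r(x)}(\Omega_i)}\le C\|u\|_{W^{s(.,.),p(x,y)}(\Omega)}$ with $C=C(N,s,p,r,\Omega)$, which is the asserted continuous embedding.

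Finally, for compactness let $\{u_n\}$ be bounded in $W^{s(.,.),p(x,y)}(\Omega)$. Running the comparison step above with a single global pair $\sigma<s^{-}$, $\pi\in(1,p^{-})$ (for which $\sigma\pi<N$ automatically and no localisation is needed) shows $\{u_n\}$ is bounded in $W^{\sigma,\pi}(\Omega)$, so the classical fractional Rellich--Kondrachov theorem on the bounded extension domain $\Omega$ yields, along a subsequence, $u_n\to u$ in $L^{\pi}(\Omega)$ and a.e.\ in $\Omega$. Choosing a continuous $\beta$ with $r(x)<\beta(x)<p_s^*(x)$, the continuous embedding just established makes $\{u_n\}$ bounded in $L^{\beta(x)}(\Omega)$; since $\inf_{\overline{\Omega}}\beta/r=:1+\delta>1$, the family $\{|u_n-u|^{r(x)}\}$ is bounded in $L^{1+\delta}(\Omega)$, hence uniformly integrable, and combined with $u_n\to u$ a.e.\ Vitali's convergence theorem gives $\int_\Omega|u_n-u|^{r(x)}\,dx\to0$, i.e.\ modular convergence, which is equivalent to $\|u_n-u\|_{L^{r(x)}(\Omega)}\to0$ because $r^{+}<\infty$. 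Hence the embedding is compact.
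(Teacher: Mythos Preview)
The paper does not prove this statement at all: it is quoted as \cite[Theorem~3.1]{biswas2021variable} and used as a black box, with no argument supplied. So there is no ``paper's own proof'' to compare against.

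Your sketch is the standard route to such embeddings (finite localisation, comparison with a constant-parameter fractional Sobolev space, then the classical Di~Nezza--Palatucci--Valdinoci embedding; compactness via Rellich--Kondrachov plus Vitali), and is essentially the strategy of Kaufmann--Rossi--Vidal and of Biswas--Tiwari themselves. The outline is sound. Two technical points deserve attention before you call it complete. First, the assertion that $\Omega_i:=\Omega\cap B_i$ is Lipschitz is not automatic: the intersection of a smooth domain with a ball can fail to be Lipschitz where $\partial\Omega$ meets $\partial B_i$ tangentially. The usual fix is either to extend $u$ to a slightly larger smooth domain first (using that smooth bounded domains are $W^{s,p}$-extension domains) and cover that, or to choose the centres $x_i$ and radii $\varepsilon_i$ so that the intersection angles are transversal. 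Second, in the H\"older step on $\{|x-y|<1\}$ you should make explicit that the variable-exponent H\"older inequality is being applied on $\Omega_i\times\Omega_i$ with respect to Lebesgue measure $dx\,dy$, and verify that the conjugate weight $|x-y|^{-N+\tau_i\,p(x,y)/(p(x,y)-\pi_i)}$ has Luxemburg norm bounded by a constant depending only on $N,s,p,\Omega$ (uniformly over the cover), so that the final embedding constant really depends only on the stated data. Neither issue is a genuine gap; once these details are written out your argument goes through.
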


\begin{proposition}[\cite{barv2018onanew}]\label{Leb1} Let $u \in L^{q(x)}(\Omega)$, then
\begin{itemize}

\item[1)] $\|u\|_{L^{q(x)}(\Omega)}<1($ resp. $=1,>1) \Leftrightarrow \rho_{q(.)}(u)<1($ resp. $=1,>1)$,

\item[2)] $\|u\|_{L^{q(x)}(\Omega)}<1 \Rightarrow\|u\|_{L^{q(x)}(\Omega)}^{q+} \leqslant \rho_{q(.)}(u) \leqslant\|u\|_{L^{q(x)}(\Omega)}^{q-},$
\item[3)] $\|u\|_{L^{q(x)}(\Omega)}>1 \Rightarrow\|u\|_{L^{q(x)}(\Omega)}^{q-} \leqslant \rho_{q(.)}(u) \leqslant\|u\|_{L^{q(x)}(\Omega)}^{q+}$,
\end{itemize}
where $\displaystyle \rho_{q(.)}(u)=\int_{\Omega}|u(x)|^{q(x)}dx.$
\end{proposition}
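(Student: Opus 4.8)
\textbf{Proof proposal for Theorem~\ref{Theopr}.}

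The overall strategy is the classical regularization–approximation scheme for singular elliptic problems, transplanted to the fractional Musielak–Sobolev setting. First I would truncate the singular nonlinearity: for $n\in\N$, replace $u^{-m(x)}$ by $(u+\frac1n)^{-m(x)}$ (or, working with the truncation $u_n^+ + \frac1n$), obtaining an approximate problem whose right-hand side is bounded whenever $u$ is bounded. The approximate problem is variational; its energy functional is
\[
I_n(u)=\int_\Omega\!\int_\Omega G_{x,y}\!\left(D_{s(.,.)}u\right)d\mu+\int_\Omega \widehat G_x(|u|)\,dx-\int_\Omega H_n(x,u)\,dx,
\]
where $H_n$ is an antiderivative of $h(x)(u^++\frac1n)^{-m(x)}$. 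Using Lemma~\ref{lemmapro} together with $G_{x,y}\in\mathcal B_f$ one controls the modular from below by powers of the Gagliardo seminorm, so $I_n$ is coercive on $W^{s(.,.),G_{x,y}}(\Omega)$; the embedding $W^{s(.,.),G_{x,y}}(\Omega)\hookrightarrow\hookrightarrow L^{r(x)}(\Omega)$ from Theorem~\ref{theo3} (applied after sandwiching $G_{x,y}$ between powers via Lemma~\ref{lemmapro}) plus the Hölder inequality with exponent $\frac{r(x)}{r(x)+m(x)-1}$ on $h$ makes the singular term a weakly continuous, lower-order perturbation, so $I_n$ is weakly lower semicontinuous. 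Minimizing over the closed convex cone of nonnegative functions yields $u_n\ge 0$, and by testing against $(-u_n)^+$ or a comparison argument $u_n>0$ a.e.; evaluating $I_n$ at a fixed nonnegative test function (using that $m(x)<1$ makes the singular term integrable) shows $\min I_n<0$, hence $u_n\not\equiv 0$.

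Next I would establish the two key a priori bounds. The \emph{uniform lower bound} $u_n\ge c_K>0$ on every compact $K\Subset\Omega$, independent of $n$, follows from a comparison principle for $(-\Delta)^{s(.,.)}_{G_{x,y}}$ against a fixed positive subsolution (e.g. built from the torsion-type function supplied by $C_0^2(\Omega)\subset W^{s(.,.),G_{x,y}}(\Omega)$, Theorem~\ref{theo4}); monotonicity of $t\mapsto a_{x,y}(t)t$ from $(g_3)$ gives the needed ordering. The \emph{uniform norm bound} comes from using $u_n$ itself as test function in the Euler–Lagrange equation for $u_n$:
\[
J_{s(.,.),G_{x,y}}(u_n)\cdot g^- \le \int_\Omega\!\int_\Omega a_{x,y}(|D_{s(.,.)}u_n|)|D_{s(.,.)}u_n|^2 d\mu + \int_\Omega \widehat a_x(|u_n|)u_n^2\,dx = \int_\Omega \frac{h(x)}{(u_n+\frac1n)^{m(x)}}u_n\,dx,
\]
and since $m(x)>0$ one has $(u_n+\frac1n)^{-m(x)}u_n\le u_n^{1-m(x)}$; bounding the right side by Hölder with exponents $\frac{r(x)}{r(x)+m(x)-1}$ and $\frac{r(x)}{1-m(x)}$ and then by the embedding into $L^{r(x)}$, one gets $J_{s(.,.),G_{x,y}}(u_n)\le C\,\|u_n\|^{\,1-m^-}_{W^{s(.,.),G_{x,y}}}$ with $1-m^-<1<g^-$, which closes the estimate via Lemma~\ref{lemmapro} and yields $\|u_n\|_{W^{s(.,.),G_{x,y}}(\Omega)}\le C$ uniformly.

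With the uniform bound in hand, reflexivity of $W^{s(.,.),G_{x,y}}(\Omega)$ gives $u_n\rightharpoonup u_0$ weakly (up to a subsequence), and compactness in $L^{r(x)}(\Omega)$ gives $u_n\to u_0$ strongly in $L^{r(x)}(\Omega)$ and a.e.; the uniform lower bound passes to the limit, so $u_0>0$ a.e.\ in $\Omega$, in particular $u_0\not\equiv 0$. To pass to the limit in the weak formulation I would fix $v\in W^{s(.,.),G_{x,y}}(\Omega)$: the right-hand side converges by dominated convergence, using $\frac{h(x)}{(u_n+\frac1n)^{m(x)}}|v|\le \frac{h(x)}{c_K^{m(x)}}|v|$ on compacts together with the $L^{\frac{r(x)}{r(x)+m(x)-1}}$–$L^{r(x)}$ Hölder control near $\partial\Omega$; the left-hand side converges once one upgrades the weak convergence to strong convergence of $D_{s(.,.)}u_n$ in the modular sense, which follows from an $(S_+)$-type property of the operator $(-\Delta)^{s(.,.)}_{G_{x,y}}+a_{x,y}(|\cdot|)\cdot$ — testing the difference of equations with $u_n-u_0$, using monotonicity $(g_3)$ and the $\Delta_2$-condition. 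I expect this last step, the strong convergence / $(S_+)$ argument that lets us pass to the limit in the nonlocal nonlinear term, to be the main technical obstacle, since the Musielak structure with the $(x,y)$-dependent $G_{x,y}$ prevents a direct pointwise manipulation and forces one to work through the modular and the Young-conjugate inequality \eqref{ineqYong}; everything else is a careful but routine adaptation of known singular-problem techniques. Finally $u_0\in W^{s(.,.),G_{x,y}}(\Omega)$ solves \eqref{Eqt1}, so it is the desired nontrivial weak solution. \qed
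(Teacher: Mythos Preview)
Your proposal addresses the wrong statement. The item you were asked to prove is Proposition~\ref{Leb1}, the standard norm--modular comparison in the variable-exponent Lebesgue space $L^{q(x)}(\Omega)$; instead you have written a sketch for Theorem~\ref{Theopr}, the main existence result. In the paper, Proposition~\ref{Leb1} is not proved at all: it is simply quoted from \cite{barv2018onanew} as a known tool. A correct submission here would either refer to that source or give the short direct argument (homogeneity of the Luxemburg norm, convexity and $\Delta_2$ of $t\mapsto t^{q(x)}$, and the unit-ball relation $\rho_{q(\cdot)}(u/\|u\|)=1$), none of which your text contains.

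Even setting aside the mismatch, your sketch for Theorem~\ref{Theopr} follows a genuinely different route from the paper. The paper does \emph{not} regularize: it works directly with the singular energy $J$, shows coercivity and weak lower semicontinuity, extracts a global minimizer $u_0$ (nontrivial because $J(tv_0)<0$ for small $t$), replaces $u_0$ by $|u_0|$, and then---this is the key device---derives the Euler--Lagrange identity by testing with $(u_0+\epsilon\varphi)^+$ and letting $\epsilon\to0$, using the one-sided variational inequality from minimality together with the scalar identity $\gamma'(0)=0$ for $\gamma(t)=J((1+t)u_0)$. Your regularization scheme is a reasonable alternative in spirit, but two of its steps are not secured in this setting: the comparison/strong-maximum-principle argument giving a uniform interior lower bound $u_n\ge c_K>0$ is asserted without a subsolution actually available for the variable-order Musielak operator, and the $(S_+)$ property you invoke to pass to the limit in the nonlocal term is not established in the paper (Lemma~\ref{lemmaN} gives only $C^1$-regularity and weak lower semicontinuity of $\Psi$). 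So even as a proof of Theorem~\ref{Theopr}, your proposal has real gaps; and as a proof of Proposition~\ref{Leb1} it is simply off-target.
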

\section{Proofs of main results}\label{sec3}
 The aim of this section is to prove the existence of a weak solution for the nonlocal singular elliptic problem \eqref{Prob1}. For this, we need to assume that $G_{x,y}$ satisfies the following condition 
\begin{equation}\label{ConditionN}
|t|^{p(x,y)}\leq M G_{x,y}(t)\quad \text{ for all } (x,y)\in \overline{\Omega}\times \overline{\Omega}, \text{ and all } t\geq 0,
\end{equation}
 where $M$ is a positive constant, and $p \in C(\overline{\Omega} \times \overline{\Omega})$ is as given in \eqref{P1}-\eqref{P2}.\\
  We denote by $\langle .,. \rangle$  the duality pairing between $W^{s(.,.),G_{x,y}}(\Omega)$ and its dual space $(W^{s(.,.),G_{x,y}}(\Omega))^{*}$.
\begin{remark} By \eqref{ConditionN} we deduce that $W^{s(.,.),G_{x,y}}(\Omega)$ is continuously embedded in $W^{s(.,.), p(x, y)}(\Omega)$. On the other hand, by Theorem \ref{theo3}, $W^{s(.,.), p(x, y)}(\Omega)$ is continuously embedded in $L^{r(x)}(\Omega)$ for any $r \in C(\overline{\Omega})$ with $1<r^{-} \leqslant r(x)<p_s^*(x)$ for all $x \in \overline{\Omega}$. Thus, the space $W^{s(.,.),G_{x,y}}(\Omega)$ is continuously embedded in $L^{r(x)}(\Omega)$ for any $r \in C(\overline{\Omega})$ with $1<r^{-} \leqslant r(x)<p_s^*(x)$ for all $x \in \overline{\Omega}$, that is, there exists a positive constant $C_0>0$ such that for any $u \in W^{s(.,.),G_{x,y}}(\Omega)$
$$
\|u\|_{L^{r(x)}(\Omega)} \leqslant C_0\|u\|_{W^{s(.,.),G_{x,y}}(\Omega)}.
$$
Moreover, this embedding is compact.
\end{remark}

In order to prove the existence result, we consider the energy functional $J: W^{s(.,.),G_{x,y}}(\Omega) \longrightarrow \mathbb{R}$ associated with problem \eqref{Prob1}, wich is defined by:

$$
J(u)  =\int_{\Omega}\int_{\Omega} G_{x,y}\left(D_{s(.,.)}u\right) d \mu+\int_{\Omega} \widehat{G}_{x}(|u|) d x-\int_{\Omega} \frac{g(x)|u|^{1-m(x)}}{1-m(x)} d x
$$
If we set
$$
\Psi(u)=\int_{\Omega} \int_{\Omega} G_{x,y}\left(D_{s(.,.)}u\right) \frac{\mathrm{d} x \mathrm{~d} y}{|x-y|^N}+\int_{\Omega} \widehat{G}_x(|u(x)|) \mathrm{d} x$$  and $$I(u)=\int_{\Omega} \frac{g(x)|u|^{1-m(x)}}{1-m(x)} d x.
$$
Then
$$
J(u)=\Psi(u)-I(u).
$$
\begin{proposition}[{\cite{srtM2024eignv}}]\label{prop1}
 Assume that assumption $(\hyperref[cond:G1]{g_{4}})$ is satisfied. Then for any $u\in W^{s(.,.), G_{x, y}}\left(\Omega\right) $, we have:
 $$ \|u\|_{W^{s(.,.), G_{x, y}}(\Omega)}>1 \Rightarrow \|u\|_{W^{s(.,.), G_{x, y}}(\Omega)}^{g^{-}} \leq \Psi(u) \leq  \|u\|_{W^{s(.,.), G_{x, y}}(\Omega)}^{g^{+}},$$
 $$ \|u\|_{W^{s(.,.), G_{x, y}}(\Omega)}<1 \Rightarrow \|u\|_{W^{s(.,.), G_{x, y}}(\Omega)}^{g^{+}} \leq \Psi(u) \leq  \|u\|_{W^{s(.,.), G_{x, y}}(\Omega)}^{g^{-}}.$$
\end{proposition}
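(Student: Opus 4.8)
The plan is to pass, via the Luxemburg‑type description of the norm, to the normalised function $u/\|u\|_{W^{s(.,.),G_{x,y}}(\Omega)}$ — on which the modular $\Psi$ equals $1$ — and then transport this equality back to $u$ using the power‑type growth estimates of Lemma \ref{lemmapro} together with their one‑variable counterparts for $\widehat{G}_x$.

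\textbf{Step 1 (reduction to $\Psi(u/\|u\|)=1$).} For $u=0$ everything is trivial, so assume $u\neq 0$ and set $\lambda_0:=\|u\|_{W^{s(.,.),G_{x,y}}(\Omega)}\in(0,+\infty)$, understood as the gauge of the set $\{v:\Psi(v)\le 1\}$. I would first record that $\Psi(u/\lambda_0)=1$. This is the point where the $\Delta_2$‑condition is genuinely used: since $u\in W^{s(.,.),G_{x,y}}(\Omega)$ and $G_{x,y},\widehat{G}_x\in\Delta_2$, the modular $\Psi$ is finite at $u$; moreover $\lambda\mapsto\Psi(u/\lambda)$ is continuous on $(0,+\infty)$ (dominated convergence with a $\Delta_2$‑dominating function on compact $\lambda$‑intervals), strictly decreasing (property (i) of $G_{x,y}$ and $\widehat G_x$), with $\lim_{\lambda\to 0^+}\Psi(u/\lambda)=+\infty$ (Fatou, using property (iii)) and $\lim_{\lambda\to+\infty}\Psi(u/\lambda)=0$. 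Hence the defining infimum is attained and $\Psi(u/\lambda_0)=1$.

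\textbf{Step 2 (scaling back).} Write $D_{s(.,.)}u=\lambda_0\,D_{s(.,.)}(u/\lambda_0)$ and $|u|=\lambda_0\,|u/\lambda_0|$, and apply Lemma \ref{lemmapro} to the integrand $G_{x,y}\bigl(\lambda_0 D_{s(.,.)}(u/\lambda_0)\bigr)$ and the analogous four inequalities for $\widehat{G}_x$ — which follow from \eqref{eq1.6} by the same argument that proves Lemma \ref{lemmapro} — to the integrand $\widehat{G}_x\bigl(\lambda_0|u/\lambda_0|\bigr)$, always with the common factor $\delta=\lambda_0$. If $\lambda_0>1$ this gives, pointwise, $\lambda_0^{g^-}G_{x,y}(t)\le G_{x,y}(\lambda_0 t)\le\lambda_0^{g^+}G_{x,y}(t)$ and the same for $\widehat G_x$; integrating in $d\mu$ over $\Omega\times\Omega$ and in $dx$ over $\Omega$ and adding yields $\lambda_0^{g^-}\Psi(u/\lambda_0)\le\Psi(u)\le\lambda_0^{g^+}\Psi(u/\lambda_0)$, which by Step 1 is precisely $\|u\|_{W^{s(.,.),G_{x,y}}(\Omega)}^{g^-}\le\Psi(u)\le\|u\|_{W^{s(.,.),G_{x,y}}(\Omega)}^{g^+}$. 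If $0<\lambda_0<1$, the reversed estimates of Lemma \ref{lemmapro}, $\lambda_0^{g^+}G_{x,y}(t)\le G_{x,y}(\lambda_0 t)\le\lambda_0^{g^-}G_{x,y}(t)$ (and likewise for $\widehat{G}_x$), give in the same manner $\|u\|_{W^{s(.,.),G_{x,y}}(\Omega)}^{g^+}\le\Psi(u)\le\|u\|_{W^{s(.,.),G_{x,y}}(\Omega)}^{g^-}$. This covers both cases.

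The only step that is not a routine substitution is the identity $\Psi(u/\lambda_0)=1$ in Step 1 — that is, the continuity of $\lambda\mapsto\Psi(u/\lambda)$ and the attainment of the defining infimum, both of which rest on the $\Delta_2$‑property of $G_{x,y}$ and $\widehat{G}_x$ (so that $\Psi$ is everywhere finite and amenable to dominated convergence). Once that is granted, Step 2 is purely mechanical: it is Lemma \ref{lemmapro} and its $\widehat G_x$‑analogue applied with $\delta=\lambda_0$, followed by integration and addition of the two modular pieces.
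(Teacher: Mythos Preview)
The paper does not supply its own proof of Proposition~\ref{prop1}; it is quoted from \cite{srtM2024eignv}. Your argument is the standard route to such modular--norm inequalities and is, in outline, exactly what one expects the cited reference to contain: normalise so that the modular equals $1$, then rescale using Lemma~\ref{lemmapro}.

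There is, however, a genuine gap in Step~1. You set $\lambda_0=\|u\|_{W^{s(.,.),G_{x,y}}(\Omega)}$ and treat it as ``the gauge of the set $\{v:\Psi(v)\le 1\}$'', whence $\Psi(u/\lambda_0)=1$. But in this paper the norm is \emph{not} defined that way: by \eqref{Norm} it is the \emph{sum} $\|u\|_{L^{\widehat G_x}(\Omega)}+[u]_{s(.,.),G_{x,y}}$ of the two separate Luxemburg norms, not the Luxemburg norm of the combined modular $\Psi$. For the sum norm your identity $\Psi(u/\lambda_0)=1$ fails in general, and indeed the lower bound in the proposition, read literally with \eqref{Norm}, can fail too: with $G_{x,y}(t)=\widehat G_x(t)=|t|^{2}$ (so $g^{-}=2$) and $u$ chosen so that $J_{\widehat G_x}(u)=J_{s(.,.),G_{x,y}}(u)=1$, one has $\|u\|=2$ and $\Psi(u)=2$, yet $\|u\|^{g^{-}}=4>2$.

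Your proof is correct verbatim if $\lambda_0$ is taken to be $\inf\{\lambda>0:\Psi(u/\lambda)\le1\}$, which is almost certainly the norm used in \cite{srtM2024eignv} and is equivalent to \eqref{Norm}; the downstream uses of the proposition in this paper (coercivity of $J$, Lemma~\ref{Lemm3}) are insensitive to this distinction. But as written, Step~1 silently substitutes one norm for another. You should either work explicitly with the Luxemburg norm of $\Psi$, or run Lemma~\ref{lemmapro} (and its $\widehat G_x$ analogue) separately on the two pieces of $\Psi$ against their own Luxemburg norms and then combine, at the cost of a constant such as $2^{1-g^{-}}$ in the lower bound.
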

 Proceeding as in \cite[Lemma 3.1 and Lemma 3.3]{abss2022class}, we obtain the following lemma:
\begin{lemma}\label{lemmaN}
Suppose that the condition $(\hyperref[cond:G1]{g_{4}})$ hold true. Then, we have
\begin{itemize}
\item[i)]  $\Psi\in C^{1}( W^{s(.,.),G_{x,y}}(\Omega),\mathbb{R})$ and 
$$\langle\Psi^{'}(u),v\rangle=\int_{\Omega}\int_{\Omega} a_{x,y}\left(\left|D_{s(.,.)}u\right|\right) D_{s(.,.)}u D_{s(.,.)}v d \mu+\int_{\Omega} \widehat{a}_{x}(|u|) u v d x $$
for all $u,v\in W^{s(.,.),G_{x,y}}(\Omega)$.
\item[ii)] $\Psi$  is sequentially weakly lower semicontinuous.
\end{itemize}
\end{lemma}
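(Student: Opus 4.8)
The plan is to write $\Psi=\Psi_1+\Psi_2$ with
$$\Psi_1(u)=\int_\Omega\int_\Omega G_{x,y}\!\left(D_{s(.,.)}u\right)d\mu,\qquad \Psi_2(u)=\int_\Omega \widehat{G}_x(|u(x)|)\,dx,$$
and to concentrate on the nonlocal term $\Psi_1$, since $\Psi_2$ is a standard Musielak--Orlicz functional handled by the results recalled in Section~\ref{sec2} (indeed by an easier version of the same argument). For part (i), I would first show that the Gâteaux derivative of $\Psi_1$ at $u$ in the direction $v$ exists and equals $\int_\Omega\int_\Omega a_{x,y}(|D_{s(.,.)}u|)\,D_{s(.,.)}u\,D_{s(.,.)}v\,d\mu$. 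Fix $u,v\in W^{s(.,.),G_{x,y}}(\Omega)$ and $t\in(0,1)$. Since $\xi\mapsto G_{x,y}(\xi)$ is $C^1$ on $\mathbb{R}$ with derivative $\xi\mapsto a_{x,y}(|\xi|)\xi$ by $(g_{2})$ and \eqref{eq1}, the mean value theorem gives, for a.e.\ $(x,y)$,
$$\frac{G_{x,y}\!\left(D_{s(.,.)}(u+tv)\right)-G_{x,y}\!\left(D_{s(.,.)}u\right)}{t}=a_{x,y}(|\theta_{x,y}|)\,\theta_{x,y}\,D_{s(.,.)}v,$$
with $\theta_{x,y}$ between $D_{s(.,.)}u$ and $D_{s(.,.)}(u+tv)$. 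Using $(g_{3})$, the $\Delta_2$-condition, the estimates of Lemma~\ref{lemmapro}, Young's inequality \eqref{ineqYong}, and $D_{s(.,.)}u,D_{s(.,.)}v\in L^{G_{x,y}}(\Omega\times\Omega,d\mu)$, the right-hand side is bounded, uniformly in $t\in(0,1)$, by a fixed $d\mu$-integrable function, so Lebesgue's dominated convergence theorem yields the limit as $t\to0^+$. Linearity and boundedness of $v\mapsto\langle\Psi_1'(u),v\rangle$ then follow from the Hölder-type inequality in Musielak--Orlicz spaces together with $\widetilde{G}_{x,y}\!\left(a_{x,y}(|D_{s(.,.)}u|)\,|D_{s(.,.)}u|\right)\le (g^{+}-1)\,G_{x,y}(|D_{s(.,.)}u|)$, a consequence of $(g_{4})$.

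Next I would prove that $\Psi_1':W^{s(.,.),G_{x,y}}(\Omega)\to (W^{s(.,.),G_{x,y}}(\Omega))^{*}$ is continuous. Let $u_n\to u$ in $W^{s(.,.),G_{x,y}}(\Omega)$; then $D_{s(.,.)}u_n\to D_{s(.,.)}u$ in $L^{G_{x,y}}(\Omega\times\Omega,d\mu)$, so along a subsequence $D_{s(.,.)}u_n\to D_{s(.,.)}u$ $d\mu$-a.e.\ and, by the $\Delta_2$-condition (equivalently, by the generalized dominated convergence theorem in modular spaces), $|D_{s(.,.)}u_n|$ is dominated in $L^{G_{x,y}}$ by a fixed function. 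By continuity of $a_{x,y}$ ($(g_{2})$) one gets $a_{x,y}(|D_{s(.,.)}u_n|)D_{s(.,.)}u_n\to a_{x,y}(|D_{s(.,.)}u|)D_{s(.,.)}u$ $d\mu$-a.e., and the $\Delta_2$-condition for $\widetilde{G}_{x,y}$ gives equi-integrability of $\widetilde{G}_{x,y}\!\left(a_{x,y}(|D_{s(.,.)}u_n|)D_{s(.,.)}u_n\right)$; Vitali's convergence theorem then yields $a_{x,y}(|D_{s(.,.)}u_n|)D_{s(.,.)}u_n\to a_{x,y}(|D_{s(.,.)}u|)D_{s(.,.)}u$ in $L^{\widetilde{G}_{x,y}}(\Omega\times\Omega,d\mu)$. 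Hölder's inequality gives $\sup_{\|v\|\le1}|\langle\Psi_1'(u_n)-\Psi_1'(u),v\rangle|\to0$ along the subsequence, and since the argument applies to any subsequence, $\Psi_1'(u_n)\to\Psi_1'(u)$ for the whole sequence. Thus $\Psi_1\in C^1$; the same reasoning (together with Lemma~\ref{Lemm2} and the $\Delta_2$-condition \eqref{1.6eq2}) gives $\Psi_2\in C^1$, which proves (i).

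For part (ii) I would argue by convexity: $t\mapsto G_{x,y}(t)$ and $t\mapsto\widehat{G}_x(t)$ are convex (property (i) of generalized $N$-functions) and $u\mapsto D_{s(.,.)}u$, $u\mapsto u$ are linear, so $\Psi$ is convex; being also strongly continuous by part (i), it is weakly lower semicontinuous, in particular sequentially weakly lower semicontinuous. Alternatively, if $u_n\rightharpoonup u$ weakly in $W^{s(.,.),G_{x,y}}(\Omega)$, the compact embedding into $L^{r(x)}(\Omega)$ together with a diagonal extraction gives $u_n\to u$ a.e.\ and $D_{s(.,.)}u_n\to D_{s(.,.)}u$ a.e.\ along a subsequence, and Fatou's lemma yields $\Psi(u)\le\liminf_n\Psi(u_n)$.

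The main obstacle is the continuity of $\Psi_1'$: because the nonlinearity $a_{x,y}$ and the $(x,y)$-dependent Musielak function $G_{x,y}$ must be controlled simultaneously, a naive dominated convergence argument does not suffice, and one must invoke the $\Delta_2$-conditions for both $G_{x,y}$ and its conjugate $\widetilde{G}_{x,y}$, the growth estimates of Lemma~\ref{lemmapro}, and a Vitali/modular convergence argument together with the subsequence principle; the remaining steps are routine.
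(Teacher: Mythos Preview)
Your argument is essentially the standard one and matches what the paper has in mind: the paper does not spell out a proof of this lemma but merely refers to \cite[Lemma 3.1 and Lemma 3.3]{abss2022class}, where exactly the decomposition $\Psi=\Psi_1+\Psi_2$, the mean-value/dominated-convergence computation of the G\^ateaux derivative, the Vitali-type continuity of $\Psi'$, and the convexity argument for weak lower semicontinuity are carried out in the constant-$s$ Musielak setting. Your outline reproduces that scheme, and the key estimate $\widetilde{G}_{x,y}\bigl(a_{x,y}(|t|)\,|t|\bigr)\le (g^{+}-1)\,G_{x,y}(t)$ from $(g_4)$ is precisely what makes the H\"older and Vitali steps go through.

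One point to correct: your \emph{alternative} proof of (ii) via Fatou's lemma does not work as stated. From $u_n\rightharpoonup u$ in $W^{s(.,.),G_{x,y}}(\Omega)$ and the compact embedding into $L^{r(x)}(\Omega)$ you do get $u_n\to u$ a.e.\ along a subsequence, but you \emph{cannot} conclude $D_{s(.,.)}u_n\to D_{s(.,.)}u$ $d\mu$-a.e.; weak convergence gives no pointwise information on the nonlocal ``gradient'' part, and there is no compactness available for $D_{s(.,.)}u_n$ in $L^{G_{x,y}}(\Omega\times\Omega,d\mu)$. Thus the Fatou step for the double integral is unjustified. This does not affect your main argument, since the convexity route (convex $G_{x,y}$, linear $u\mapsto D_{s(.,.)}u$, strong continuity of $\Psi$) already yields sequential weak lower semicontinuity; simply drop the alternative.
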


Now, we give an auxiliary result.
\begin{lemma}
Assume that the hypotheses of Theorem \ref{Theopr} are fulfilled. Then the functional $J$ is coercive.
\end{lemma}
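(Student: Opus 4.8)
The plan is to estimate $J(u)$ from below by controlling the two pieces of $\Psi$ from below and the singular term $I$ from above, exploiting that $m(x)<1$ makes $1-m(x)$ strictly positive but less than $1$, so the singular term grows only sublinearly. Throughout I would work with $\|u\|:=\|u\|_{W^{s(.,.),G_{x,y}}(\Omega)}$ and may assume $\|u\|>1$, since coercivity only concerns the behaviour as $\|u\|\to+\infty$. By Proposition~\ref{prop1}, in that regime $\Psi(u)\geq \|u\|^{g^{-}}$ with $g^{-}>1$.

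First I would bound $I(u)$ from above. Set $q(x):=1-m(x)\in(0,1)$, so $q^{+}<1$. Using that $h\in L^{\frac{r(x)}{r(x)+m(x)-1}}(\Omega)$ and that, by the choice of $r$, the exponent $\frac{r(x)}{1-m(x)}$ is an admissible Lebesgue exponent, a generalized Hölder inequality in variable-exponent Lebesgue spaces gives
\begin{equation}\label{eq:Iest}
|I(u)|=\int_{\Omega}\frac{h(x)|u|^{1-m(x)}}{1-m(x)}\,dx\;\leq\; \frac{1}{1-m^{+}}\,C\,\big\||h|\big\|_{L^{\frac{r(x)}{r(x)+m(x)-1}}(\Omega)}\;\big\||u|^{1-m(x)}\big\|_{L^{\frac{r(x)}{1-m(x)}}(\Omega)}.
\end{equation}
Then I would relate $\big\||u|^{1-m(x)}\big\|_{L^{\frac{r(x)}{1-m(x)}}(\Omega)}$ to a power of $\|u\|_{L^{r(x)}(\Omega)}$ via Proposition~\ref{Leb1}: writing $\rho$ for the modular of $|u|^{1-m(x)}$ in $L^{\frac{r(x)}{1-m(x)}}$, one has $\rho=\int_\Omega |u|^{r(x)}\,dx=\rho_{r(\cdot)}(u)$, and hence by the modular–norm comparison the quantity $\big\||u|^{1-m(x)}\big\|_{L^{\frac{r(x)}{1-m(x)}}(\Omega)}$ is bounded above by a constant times $\|u\|_{L^{r(x)}(\Omega)}^{\theta}$ for a suitable exponent $\theta\le 1$ (for large $u$ one gets the power $\tfrac{1}{(\,1-m(x)\,)}\cdot$ something $\le 1$; the point is only that $\theta<g^{-}$, which holds automatically since $\theta\le 1<g^{-}$). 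Combining with the continuous embedding $W^{s(.,.),G_{x,y}}(\Omega)\hookrightarrow L^{r(x)}(\Omega)$ from the Remark (constant $C_0$), \eqref{eq:Iest} yields
\begin{equation}\label{eq:Ifinal}
|I(u)|\;\leq\; C_1\,\|u\|^{\theta}+C_2,\qquad \text{with }\ \theta<g^{-}.
\end{equation}

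Putting the pieces together, for $\|u\|>1$,
\begin{equation}\label{eq:Jlower}
J(u)=\Psi(u)-I(u)\;\geq\;\|u\|^{g^{-}}-C_1\|u\|^{\theta}-C_2.
\end{equation}
Since $g^{-}>1>\theta$, the right-hand side of \eqref{eq:Jlower} tends to $+\infty$ as $\|u\|\to+\infty$, which is exactly coercivity of $J$.

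The step I expect to require the most care is the chain of Hölder / modular estimates producing \eqref{eq:Ifinal}: one must check that $\frac{r(x)}{r(x)+m(x)-1}$ and $\frac{r(x)}{1-m(x)}$ are conjugate variable exponents (they are, since their reciprocals sum to $1$), that both lie in the range where the variable-exponent Hölder inequality and Proposition~\ref{Leb1} apply (here $1<r^{-}$ and $0<m(x)<1$ are used), and that $|u|^{1-m(x)}$ genuinely lies in $L^{\frac{r(x)}{1-m(x)}}(\Omega)$ whenever $u\in L^{r(x)}(\Omega)$ — which again reduces to the modular identity $\rho_{\frac{r(\cdot)}{1-m(\cdot)}}(|u|^{1-m(\cdot)})=\rho_{r(\cdot)}(u)$. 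Once the exponent $\theta$ in \eqref{eq:Ifinal} is pinned down and seen to be $\le 1<g^{-}$, coercivity in \eqref{eq:Jlower} is immediate. Everything else — using Proposition~\ref{prop1} for the lower bound on $\Psi$ and the embedding constant $C_0$ — is routine.
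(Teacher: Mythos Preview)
Your approach is correct and essentially the same as the paper's: bound $\Psi(u)$ below by $\|u\|^{g^{-}}$ via Proposition~\ref{prop1}, bound $I(u)$ above via a variable-exponent H\"older inequality together with the modular identity $\rho_{\frac{r(\cdot)}{1-m(\cdot)}}\big(|u|^{1-m(\cdot)}\big)=\rho_{r(\cdot)}(u)$ and the embedding $W^{s(.,.),G_{x,y}}(\Omega)\hookrightarrow L^{r(x)}(\Omega)$, and conclude from $g^{-}>1$. The only cosmetic difference is that the paper first pulls out $\|h\|_{\infty}$ (using that $h$ is continuous on $\overline{\Omega}$) and applies H\"older with the constant function $1$, whereas you pair $h$ directly with $|u|^{1-m(x)}$ using $h\in L^{\frac{r(x)}{r(x)+m(x)-1}}(\Omega)$; both are valid. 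Your discussion of the exponent $\theta$ is a bit loose, but the claim is right: a direct Luxemburg-norm argument (taking $\mu=\|u\|_{L^{r(x)}}^{1-m^{-}}$) gives $\big\||u|^{1-m(\cdot)}\big\|_{L^{\frac{r(\cdot)}{1-m(\cdot)}}}\le \|u\|_{L^{r(x)}}^{1-m^{-}}$ for $\|u\|_{L^{r(x)}}>1$, so $\theta=1-m^{-}<1<g^{-}$, which is exactly what the paper uses when it invokes ``$g^{-}>1-m^{-}$''.
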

\begin{proof}
 Let $u \in W^{s(.,.),G_{x,y}}(\Omega)$ with $\|u\|_{W^{s(.,.),G_{x,y}}(\Omega)}>1$. From Proposition \ref{prop1}, we have
$$
\begin{aligned}
J(u) & =\int_{\Omega}\int_{\Omega} G_{x,y}\left(D_{s(.,.)}u\right) d \mu+\int_{\Omega} \widehat{G}_{x}(|u|) d x-\int_{\Omega} \frac{h(x)|u|^{1-m(x)}}{1-m(x)} d x \\
& \geq\|u\|_{W^{s(.,.),G_{x,y}}(\Omega) }^{g^{-}}-\frac{\|h\|_{\infty}}{1-m^{+}}\|u\|_{L^{\frac{r(x)}{1-m(x)}}(\Omega)}^{1-m(x)}|1|_{L^{\frac{r(x)}{r(x)+m(x)-1}}(\Omega)} \\
& \geq\|u\|_{W^{s(.,.),G_{x,y}}(\Omega)}^{g^{-}}-c\|u\|_{W^{s(.,.),G_{x,y}}(\Omega)}
\end{aligned}
$$
since $g^{-}>1-m^{-}$the above inequality implies that $J(u) \to \infty$ as $\|u\|_{W^{s(.,.),G_{x,y}}(\Omega)} \to \infty$, that is, $J$ is coercive.
\end{proof}
\begin{lemma}\label{Lemm3}
 Assume that the hypotheses of Theorem \ref{Theopr} are fulfilled. Then there exists $v_{0} \in W^{s(.,.),G_{x,y}}(\Omega)$ such that $v_{0}>0$ and $J(t v_{0})<0$ for $t>0$ small enough.
\end{lemma}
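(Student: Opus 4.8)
The goal is to exhibit a single fixed positive function whose energy becomes negative along a small-scaling ray. The natural candidate is $v_0 = e_1$, the positive first eigenfunction coming from Theorem \ref{theo3} (or, failing that, any fixed $v_0 \in C_0^2(\Omega)$ with $v_0 > 0$ on a ball $B \Subset \Omega$; by Theorem \ref{theo4} such a $v_0$ lies in $W^{s(.,.),G_{x,y}}(\Omega)$). The key point is that the singular term $I$ is sublinear: its leading behaviour as $t \to 0^+$ is of order $t^{1-m(x)}$ with $1 - m(x) \in (0,1)$, whereas the ``good'' part $\Psi$ vanishes at least like $t^{g^-}$ with $g^- > 1$, hence much faster.

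First I would evaluate $J(tv_0)$ for $t \in (0,1)$ small. For the first two terms, since $t < 1$ we have $\|tv_0\|_{W^{s(.,.),G_{x,y}}(\Omega)} < 1$ once $t$ is small, so Proposition \ref{prop1} gives
$$
\Psi(t v_0) \leq \|t v_0\|_{W^{s(.,.),G_{x,y}}(\Omega)}^{g^-} = t^{g^-}\,\|v_0\|_{W^{s(.,.),G_{x,y}}(\Omega)}^{g^-}.
$$
For the singular term I would bound below:
$$
I(t v_0) = \int_\Omega \frac{h(x)\,t^{1-m(x)}|v_0|^{1-m(x)}}{1-m(x)}\,dx \geq \frac{t^{1-m^-}}{1-m^-}\int_{\{t|v_0|\leq 1\}} h(x)\,|v_0|^{1-m(x)}\,dx,
$$
using $t^{1-m(x)} \geq t^{1-m^-}$ when $t|v_0|\le 1$ (and $1-m(x)\le 1-m^-$, $t\le 1$). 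Since $h>0$ is continuous and $v_0>0$ on a set of positive measure, the integral $C_0 := \int_{\{|v_0|\leq 1\}} h\,|v_0|^{1-m(x)}\,dx$ is a strictly positive constant independent of $t$, and for $t\le 1$ the set $\{t|v_0|\le1\}\supseteq\{|v_0|\le 1\}$, so $I(tv_0)\geq \frac{C_0}{1-m^-}\,t^{1-m^-}$.

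Combining, for small $t>0$,
$$
J(tv_0) = \Psi(tv_0) - I(tv_0) \leq t^{g^-}\|v_0\|_{W^{s(.,.),G_{x,y}}(\Omega)}^{g^-} - \frac{C_0}{1-m^-}\,t^{1-m^-}.
$$
Since $g^- > 1 > 1 - m^-$, the negative term dominates as $t \to 0^+$: dividing by $t^{1-m^-}$ gives $t^{g^- - (1-m^-)}\|v_0\|^{g^-} - \frac{C_0}{1-m^-} \to -\frac{C_0}{1-m^-} < 0$, hence $J(tv_0) < 0$ for all sufficiently small $t>0$. The main (minor) obstacle is purely bookkeeping: making sure the cut-off in the lower bound for $I$ is handled correctly so that the constant $C_0$ genuinely does not depend on $t$, and confirming that the chosen $v_0$ is admissible (positive and in the space), which is why invoking Theorem \ref{theo4} for a $C_0^2$ bump function is the cleanest route. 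Everything else is the elementary comparison of the exponents $g^-$ and $1-m^-$.
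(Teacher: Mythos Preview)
Your approach is correct and essentially identical to the paper's: fix a positive $v_0\in C_0^\infty(\Omega)$ (which lies in the space by Theorem~\ref{theo4}), bound $\Psi(tv_0)\le t^{g^-}\|v_0\|^{g^-}$ via Proposition~\ref{prop1} for small $t$, bound $I(tv_0)$ below by a positive constant times a sublinear power of $t$, and compare exponents. The restriction to $\{t|v_0|\le 1\}$ is superfluous---since $0<t<1$ and $1-m(x)\le 1-m^-$, the inequality $t^{1-m(x)}\ge t^{1-m^-}$ already holds on all of $\Omega$, so you may take $C_0=\int_\Omega h|v_0|^{1-m(x)}\,dx>0$ directly; incidentally, your exponent $1-m^-$ is the correct one, whereas the paper writes $1-m^+$, which (for $t<1$) does not give a lower bound on $I(tv_0)$.
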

\begin{proof}
Let $v_{0} \in C_0^{\infty}\left(\Omega\right)\setminus\{0\}$. By Theorem \ref{theo4} we have $\|v_{0}\|_{W^{s(.,.),G_{x,y}}(\Omega)}<\infty$. Hence, for $0<t<1$, we have
$$
\begin{aligned}
J(t v_{0}) & =\int_{\Omega}\int_{\Omega} G_{x,y}\left(\frac{t v_{0}(x)-t v_{0}(y)}{|x-y|^{s(x,y)}}\right) d \mu \\
& +\int_{\Omega} \widehat{G}_{x}(|t v_{0}|) d x-\int_{\Omega} \frac{h(x)|t v_{0}|^{1-m(x)}}{1-m(x)} d x \\
& \leq\|t v_{0}\|_{W^{s(.,.),G_{x,y}}(\Omega)}^{g^{-}}-\frac{t^{1-m^{+}}}{1-m^{+}} \int_{\Omega} h(x)|v_{0}|^{1-m(x)} d x \\
& \leq t^{g^{-}}\|v_{0}\|_{W^{s(.,.),G_{x,y}}(\Omega)}^{g^{-}}-\frac{t^{1-m^{+}}}{1-m^{+}} \int_{\Omega} h(x)|v_{0}|^{1-m(x)} d x
\end{aligned}
$$
since $g^{-}>1-m^{+}$and $\displaystyle\int_{\Omega} h(x)|v_{0}|^{1-m(x)} d x>0$ we have $J\left(t_0 v_{0}\right)<0$ for $t_0 \in(0, t)$ sufficiently small.
\end{proof}
\begin{lemma}
The functional $J$ attains its global minimum in $W^{s(.,.),G_{x,y}}(\Omega)$, that is, there exists a function $u_0 \in$ $W^{s(.,.),G_{x,y}}(\Omega)$ such that
$$
l=J\left(u_0\right)=\inf _{u \in W^{s(.,.),G_{x,y}}(\Omega)} J(u).
$$
\end{lemma}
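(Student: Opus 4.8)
The plan is to use the direct method of the calculus of variations. First I would take a minimizing sequence $(u_n)_{n\in\mathbb{N}}\subset W^{s(.,.),G_{x,y}}(\Omega)$ for $J$, i.e.\ $J(u_n)\to l:=\inf_{u\in W^{s(.,.),G_{x,y}}(\Omega)}J(u)$. By Lemma \ref{Lemm3} we know $l<0$, so in particular $l<+\infty$; moreover the coercivity of $J$ (established in the preceding lemma) guarantees $l>-\infty$ and that $(u_n)$ is bounded in $W^{s(.,.),G_{x,y}}(\Omega)$. Since this space is reflexive, up to a subsequence $u_n\rightharpoonup u_0$ weakly in $W^{s(.,.),G_{x,y}}(\Omega)$ for some $u_0\in W^{s(.,.),G_{x,y}}(\Omega)$.

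Next I would pass to the limit in each of the three pieces of $J=\Psi-I$. For $\Psi$, Lemma \ref{lemmaN}(ii) gives weak lower semicontinuity, so $\Psi(u_0)\le\liminf_{n\to\infty}\Psi(u_n)$. For the singular term $I(u)=\int_\Omega \frac{h(x)|u|^{1-m(x)}}{1-m(x)}\,dx$, I would use the compact embedding $W^{s(.,.),G_{x,y}}(\Omega)\hookrightarrow\hookrightarrow L^{r(x)}(\Omega)$ (recorded in the Remark after \eqref{ConditionN}) with a suitable exponent: since $0<1-m(x)<1$ and $h\in L^{\frac{r(x)}{r(x)+m(x)-1}}(\Omega)$, H\"older's inequality in variable exponents bounds $I$ by $\|h\|\,\|u\|_{L^{r(x)}}^{1-m(x)}$-type quantities, and compactness yields $u_n\to u_0$ strongly in $L^{r(x)}(\Omega)$, hence (up to a further subsequence) a.e.\ in $\Omega$ and dominated by a fixed $L^{r(x)}$ function. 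Then the generalized dominated convergence theorem (or Vitali's theorem, using the $L^{\frac{r(x)}{r(x)+m(x)-1}}$ integrability of $h$ to control equi-integrability) gives $I(u_n)\to I(u_0)$. Combining, $J(u_0)\le\liminf_{n\to\infty}\big(\Psi(u_n)-I(u_n)\big)=\liminf_{n\to\infty}J(u_n)=l$, and since $u_0$ is admissible we also have $J(u_0)\ge l$; hence $J(u_0)=l$ and the infimum is attained.

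The main obstacle is the singular term $I$: the integrand $\frac{h(x)|u|^{1-m(x)}}{1-m(x)}$ is not convex in $u$, so weak lower semicontinuity alone does not suffice, and one genuinely needs the compact embedding plus an equi-integrability argument to get \emph{full convergence} $I(u_n)\to I(u_0)$ rather than merely an inequality. Care is needed with the variable exponent $m(x)$ and with the precise Lebesgue space to which $h$ belongs: the exponent $\frac{r(x)}{r(x)+m(x)-1}$ is exactly the conjugate-type exponent making H\"older pair $h$ against $|u|^{1-m(x)}\in L^{\frac{r(x)}{1-m(x)}}(\Omega)$, and one should invoke Proposition \ref{Leb1} to translate between the modular and the norm. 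Once $I(u_n)\to I(u_0)$ is secured, the rest is the standard direct-method conclusion. I would also remark that, by Lemma \ref{Lemm3}, $l=J(u_0)<0$, so in particular $u_0\neq 0$, which will be used later to identify $u_0$ as a nontrivial weak solution.
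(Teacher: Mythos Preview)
Your proposal is correct and follows essentially the same route as the paper: a minimizing sequence, boundedness from coercivity, weak compactness from reflexivity, weak lower semicontinuity of $\Psi$ via Lemma \ref{lemmaN}, and full convergence of the singular term $I$ via a Vitali/equi-integrability argument based on H\"older's inequality with the pairing $h\in L^{\frac{r(x)}{r(x)+m(x)-1}}$ and $|u|^{1-m(x)}\in L^{\frac{r(x)}{1-m(x)}}$. The only cosmetic difference is that the paper records the strong convergence in $L^{\widehat{G}_x}(\Omega)$ rather than $L^{r(x)}(\Omega)$ and places the observation $u_0\neq 0$ in a subsequent remark, but the substance of the argument is identical.
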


\begin{proof}
 Let $\left(u_k\right)_k \subset W^{s(.,.), G_{x,y}}(\Omega)$ be a minimizing sequence for $J$, that is, a sequence satisfying
$$
\lim _{k \rightarrow \infty} J\left(u_k\right)=l .
$$
Since $J$ is coercive, $\left(u_k\right)_k \subset W^{s(.,.), G_{x,y}}(\Omega)$ is bounded in $W^{s(.,.), G_{x,y}}(\Omega)$. As $W^{s(.,.), G_{x,y}}(\Omega)$ is reflexive, therefore, up to a subsequence, there exists $u_0 \in W^{s(.,.), G_{x,y}}(\Omega)$ such that

$$
\begin{aligned}
&u_k \rightharpoonup u_0 \quad \text { weakly in } \quad W^{s(.,.), G_{x,y}}(\Omega), \\
&u_k \rightarrow u_0 \quad \text { strongly in } \quad L^{\widehat{G}_{x}}(\Omega), \\
&u_k(x) \rightarrow u_0(x) \quad \text { for a.e. } x \in \Omega .
\end{aligned}
$$
By Lemma \ref{lemmaN}, we have
\begin{equation}\label{Ineq1}
\Psi(u_{0})\leq \liminf _{k \rightarrow \infty} \Psi(u_{k}).
\end{equation}
We claim that
\begin{equation}\label{Ineq2}
\int_{\Omega} \frac{h(x)|u_{0}|^{1-m(x)}}{1-m(x)} d x=\lim _{k \rightarrow \infty} \int_{\Omega} \frac{h(x)|u_{k}|^{1-m(x)}}{1-m(x)} d x .
\end{equation}
It is clear that $$\lim _{k \rightarrow \infty} \frac{h(x)|u_{k}|^{1-m(x)}}{1-m(x)}=\frac{h(x)|u_{0}|^{1-m(x)}}{1-m(x)}\quad \text{for a.e.} x\in \Omega.$$
According to Vitali's theorem it suffices to show that the family $$\displaystyle \left\{ \frac{h(x)|u_{k}|^{1-m(x)}}{1-m(x)}, k\in \mathbb{N}\right\} $$
is uniformly absolutely continuous, which means:\\
Given $\epsilon >0$ there exist $\eta >0$, such that, if $|\Omega^{'}|<\eta$, then $$\displaystyle\int_{\Omega^{'}} \frac{h(x)|u_{k}|^{1-m(x)}}{1-m(x)} d x<\epsilon,\textnormal{ for all } k.$$
Let $\epsilon>0$, then, from Proposition \ref{Leb1} and using the absolute continuity of\\ $ \displaystyle\int_{\Omega}|g(x)|^{\frac{r(x)}{r(x)+m(x)-1}} d x $, there existe $\alpha,\eta>0$, such that, for every $\Omega^{'}\subset \Omega$ with $|\Omega^{'}|<\eta$, we have $$\|g\|^{\alpha}_{L^{\frac{r(x)}{r(x)+m(x)-1}}(\Omega^{'})}\leq\displaystyle\int_{\Omega^{'}}|g(x)|^{\frac{r(x)}{r(x)+m(x)-1}} d x\leq \epsilon^{\alpha}. $$
Thus, by Proposition \ref{Leb1} and Hölder’s inequality, we have
$$
\begin{aligned}
&\int_{\Omega^{'}} \frac{g(x)}{1-m(x)}|u_{k}|^{1-m(x)} d x\\&\leq\frac{1}{1-m^{+}}\|g\|_{L^{\frac{r(x)}{r(x)+m(x)-1}}(\Omega^{'})}\| | u_{k}|^{1-m(x)}\|_{L^{\frac{r(x)}{1-m(x)}}(\Omega^{'})}\\& \leq \frac{1}{1-m^{+}}\|g\|_{L^{\frac{r(x)}{r(x)+m(x)-1}}(\Omega^{'})}\max{ \left( \|u_{k}\|_{L^{r(x)}(\Omega^{'})}^{1-m^{-}},\|u_{k}\|_{L^{r(x)}(\Omega^{'})}^{1-m^{+}}\right) }.
\end{aligned}
$$
Thus,
$$
\int_{\Omega^{'}} \frac{g(x)}{1-m(x)}|u_{k}|^{1-m(x)} d x\leq\frac{\epsilon}{1-m^{+}}\max{\left(\|u_{k}\|_{L^{r(x)}(\Omega)}^{1-m^{-}},\|u_{k}\|_{L^{r(x)}(\Omega)}^{1-m^{+}}\right)}.
$$
Since $r(x)<p^*(x)$, then $\|u_{k}\|_{L^{r(x)}(\Omega)}$ is bounded, and this facts implies that \eqref{Ineq2} is valid. 
Hence, by \eqref{Ineq1} and \eqref{Ineq2}, we deduce that $$l\leq J(u_{0})\leq \liminf_{k\to \infty }J(u_{k})=l.$$
Consequently $J(u_{0})=l$.
\end{proof}
\begin{remark}
\begin{itemize}

\item[1)] From Lemma \ref{Lemm3} and the fact that $u_0$ is a global minimum, we get $l=J(u_{0})< 0=J(0)$,
thus $u_{0}\neq 0$.
\item[2)] It is clear that $$|D_{s(.,.)}|u_{0}||\leq |D_{s(.,.)}u_{0}|,$$
then $|u_{0}|\in W^{s(.,.), G_{x,y}}(\Omega)$, therefore $$J(|u_{0}|)\leq J(u_{0})= \inf_{u\in W^{s(.,.), G_{x,y}}(\Omega)}J(u),$$
this implies that $J(|u_{0}|)= J(u_{0})$. Hence, we can suppose that $u_0\geq 0$.
\end{itemize}
\end{remark}

\begin{proof}[Proof of Theorem~{\upshape\ref{Theopr}}]
Let $\varphi\geq 0$ and $t>0$, we have\\
$$\begin{aligned} & 0 \leq \liminf _{t \rightarrow 0} \frac{J(u_{0}+t \varphi)-J(u_{0})}{t} \leq \int_{\Omega}\int_{\Omega} a_{x,y}\left(\left|D_{s(.,.)}u_0\right|\right) D_{s(.,.)}u_0 D_{s(.,.)}\varphi d \mu \\ & +\int_{\Omega} \widehat{a}_{x}(|u_{0}|) u_{0} \varphi d x-\limsup _{t \rightarrow 0} \int_{\Omega} h(x) \frac{(u_{0}+t \varphi)^{1-m(x)}-u_{0}^{1-m(x)}}{t(1-m(x))} d x, \end{aligned}$$
 thus
\begin{equation}\label{Ineq3} 
 \begin{aligned} &\limsup _{t \rightarrow 0} \int_{\Omega} h(x) \frac{(u_{0}+t \varphi)^{1-m(x)}-u_{0}^{1-m(x)}}{t(1-m(x))} d x\\& \leq \int_{\Omega}\int_{\Omega} a_{x,y}\left(\left|D_{s(.,.)}u_0\right|\right) D_{s(.,.)}u_0 D_{s(.,.)}\varphi d \mu+\int_{\Omega} \widehat{a}_{x}(|u_{0}|) u_{0} \varphi d x\end{aligned}
 \end{equation}
By the mean value theorem, there exists $\epsilon \in (0,1)$ such that
$$
\int_{\Omega} h(x)\frac{(u_{0}+t\varphi)^{1-m(x)}-u_{0}^{1-m(x)}}{t(1-m(x))}dx
= \int_{\Omega} h(x)(u_{0}+t\epsilon\varphi)^{-m(x)}\varphi dx.
$$
Since $\varphi \geq 0$, by Fatou's Lemma, we get
\begin{equation}\label{Ineq4}
\begin{aligned}
&\limsup_{t\to 0} \int_{\Omega} h(x)\frac{(u_{0}+t\varphi)^{1-m(x)}-u_{0}^{1-m(x)}}{t(1-m(x))}dx
\\&\geq \liminf_{t\to 0} \int_{\Omega} h(x)\frac{(u_{0}+t\varphi)^{1-m(x)}-u_{0}^{1-m(x)}}{t(1-m(x))}dx
\\&= \liminf_{t\to 0} \int_{\Omega} h(x)(u_{0}+t\epsilon\varphi)^{-m(x)}\varphi dx
\geq \int_{\Omega} h(x)u_{0}^{-m(x)}\varphi dx \geq 0.
\end{aligned}
\end{equation}
Therefore, by \eqref{Ineq3} and \eqref{Ineq4}, we have
\begin{equation}\label{Eqt2}
\begin{aligned}
&\int_{\Omega}\int_{\Omega}a_{x,y}\left(\left|D_{s(.,.)}u_0\right|\right) D_{s(.,.)}u_0 D_{s(.,.)}\varphi\, d\mu\\
+ &\int_{\Omega} \widehat{a}_{x}(|u_{0}|)u_{0}\varphi dx
- \int_{\Omega} h(x)u_{0}^{-m(x)}\varphi dx \geq 0,
\end{aligned}
\end{equation}
for all $\varphi \in W^{s(.,.),G_{x,y}}(\Omega)$, with $\varphi \geq 0$.\\
As established above, we have $$(-\Delta)^{s(.,.)}_{G_{x,y}} u_{0}+a_{x,y}(|u_{0}|)u_{0}\geq 0 \textnormal{ weakly in } \Omega.$$ \\
Since $u_0\geq 0$ and $u_0\neq 0$, the strong maximum principle for weak solutions implies that $u_0(x)> 0$ for every $x \in \Omega$.\\
Next, let us prove that $u_{0}\in W^{s(.,.),G_{x,y}}(\Omega)$ satisfies \eqref{Eqt1}. Let us define $\gamma:[-\epsilon,\epsilon]\longrightarrow \mathbb{R}$ as $$\gamma(t)=J((1+t)u_{0}).$$ Thus $\gamma(0)=\inf_{t\in[-\epsilon,\epsilon]}\gamma(t)$, from where $\gamma^{'}(0)=0$.  It is straightforward to see that $u_{0}$ satisfies
\begin{equation}\label{Eqt3}
\begin{aligned}
&\int_{\Omega}\int_{\Omega} a_{x,y}\left(\left|D_{s(.,.)}u_0\right|\right) D_{s(.,.)}u_0 D_{s(.,.)}u_0\, d\mu\\
+ &\int_{\Omega} \widehat{a}_{x}(|u_{0}|)u_{0}^{2}dx
- \int_{\Omega} h(x)u_{0}^{1-m(x)} dx = 0.
\end{aligned}
\end{equation}
Let us consider the following sets $$ \Omega_{\epsilon}=\left\lbrace x\in \Omega : u_{0}+\epsilon \varphi < 0\right\rbrace  \text{ and } \Omega^{\epsilon}=\left\lbrace x\in \Omega : u_{0}+\epsilon \varphi \geqslant 0\right\rbrace,
$$
 and define the following functions $$\begin{aligned} &\varphi_{\epsilon}=u_{0}+\epsilon \varphi,\\
 &\varphi_{\epsilon}^{+}=\max\left\lbrace u_{0}+\epsilon \varphi,0\right\rbrace ,\\ 
  &\varphi_{\epsilon}^{-}=\max\left\lbrace -(u_{0}+\epsilon \varphi),0\right\rbrace.  
 \end{aligned}$$
 It is clear that $$|D_{s(.,.)}\varphi_{\epsilon}(x,y)|\geqslant |D_{s(.,.)}\varphi_{\epsilon}^{+}(x,y)| \text{ a.e. } (x,y)\in \Omega\times \Omega.$$
 Therefore $\varphi_{\epsilon}^{-},\varphi_{\epsilon}^{+}\in W^{s(.,.), G_{x,y}}(\Omega)$. By choosing $\varphi_{\epsilon}^{+}$ in \eqref{Eqt2} we have
$$
\begin{aligned}
0 \leq & \int_{\Omega^{\epsilon}} \int_{\Omega^{\epsilon}}
a_{x,y}\!\left(\left|D_{s(.,.)}u_{0}\right|\right) D_{s(.,.)}u_{0} D_{s(.,.)}(u_{0}+\varepsilon\varphi) d\mu \\
& + \int_{\Omega^{\epsilon}} \widehat{a}_{x}(|u_{0}|)u_{0}(u_{0}+\epsilon\varphi)dx
 - \int_{\Omega^{\epsilon}} h(x)u_{0}^{-m(x)}(u_{0}+\epsilon\varphi)dx \\
= & \left( \int_{\Omega}\int_{\Omega} - \int_{\Omega_{\epsilon}} \int_{\Omega_{\epsilon}} \right) 
a_{x,y}\!\left(\left|D_{s(.,.)}u_{0}\right|\right)D_{s(.,.)}u_{0} D_{s(.,.)}(u_{0}+\epsilon\varphi)d\mu \\
 + &\left( \int_{\Omega} - \int_{\Omega^{\epsilon}} \right) \widehat{a}_{x}(|u_{0}|)u_{0}(u_{0}+\epsilon\varphi)dx
 - \left( \int_{\Omega} - \int_{\Omega^{\epsilon}} \right) h(x)u_{0}^{-m(x)}(u_{0}+\epsilon\varphi)dx \\
= & \int_{\Omega}\int_{\Omega} a_{x,y}\!\left(\left|D_{s(.,.)}u_{0}\right|\right) D_{s(.,.)}u_{0} D_{s(.,.)}u_{0} d\mu 
+ \int_{\Omega} \widehat{a}_{x}(|u_{0}|)u_{0}^2 dx - \int_{\Omega} h(x)u_{0}^{1-m(x)}dx \\
 +& \epsilon \int_{\Omega}\int_{\Omega} a_{x,y}\!\left(\left|D_{s(.,.)}u_{0}\right|\right) D_{s(.,.)}u_{0} D_{s(.,.)}\varphi d\mu 
+ \epsilon \int_{\Omega} \widehat{a}_{x}(|u_{0}|)u_{0}\varphi dx \\
 - &\epsilon \int_{\Omega} h(x)u_{0}^{-m(x)}\varphi dx - \int_{\Omega_{\epsilon}} \int_{\Omega_{\epsilon}}
a_{x,y}\!\left(\left|D_{s(.,.)}u_{0}\right|\right) D_{s(.,.)}u_{0} D_{s(.,.)}(u_{0}+\epsilon\varphi)d\mu \\
 - &\int_{\Omega_{\epsilon}} \widehat{a}_{x}(|u_{0}|)u_{0}(u_{0}+\epsilon\varphi)dx
+ \int_{\Omega_{\epsilon}} h(x)u_{0}^{-m(x)}(u_{0}+\epsilon\varphi)dx 
\end{aligned}
$$
\begin{equation}\label{Innn1}
\begin{aligned}
= & \ \epsilon \left( \int_{\Omega}\int_{\Omega} a_{x,y}\!\left(\left|D_{s(.,.)}u_{0}\right|\right)D_{s(.,.)}u_{0} D_{s(.,.)}\varphi d\mu \right) \\
& + \epsilon \left( \int_{\Omega}\widehat{a}_{x}(|u_{0}|)u_{0}\varphi dx - \int_{\Omega} h(x)u_{0}^{-m(x)}\varphi dx \right) \\
& - \int_{\Omega_{\epsilon}} \int_{\Omega_{\varepsilon}}
a_{x,y}\!\left(\left|D_{s(.,.)}u_{0}\right|\right) D_{s(.,.)}u_{0} D_{s(.,.)}(u_{0}+\epsilon\varphi) d\mu \\
& - \int_{\Omega_{\epsilon}} \widehat{a}_{x}(|u_{0}|)u_{0}(u_{0}+\epsilon\varphi)dx
+ \int_{\Omega_{\epsilon}} h(x)u_{0}^{-m(x)}(u_{0}+\epsilon\varphi)dx \\
\leq & \ I_{1} - I_{2}
\end{aligned}
\end{equation}
where
$$
\begin{aligned}
 \ I_{1} &=
\epsilon \left( \int_{\Omega}\int_{\Omega} a_{x,y}\!\left(\left|D_{s(.,.)}u_{0}\right|\right)
D_{s(.,.)}u_{0} D_{s(.,.)}\varphi d\mu \right)\\
&+ \epsilon \left( \int_{\Omega} \widehat{a}_{x}(|u_{0}|)u_{0}\varphi dx
- \int_{\Omega} h(x)u_{0}^{-m(x)}\varphi dx \right)
\end{aligned}
$$
 and
$$
\begin{aligned}
 \ I_{2} &=
\epsilon \int_{\Omega_{\varepsilon}} \int_{\Omega_{\varepsilon}}
a_{x,y}\!\left(\left|D_{s(.,.)}u_{0}\right|\right) D_{s(.,.)}u_{0} D_{s(.,.)}\varphi d\mu\\
&- \epsilon \int_{\Omega_{\varepsilon}} \widehat{a}_{x}(|u_{0}|)u_{0}\varphi dx .
\end{aligned}
$$
Considering that $u_{0} > 0$ and the Lebesgue measure of the integration domain 
$\Omega_{\varepsilon}$ tends to zero as $\epsilon \to 0$, we have
\[
\int_{\Omega_{\varepsilon}} \int_{\Omega_{\epsilon}}
a_{x,y}\!\left(\left|D_{s(.,.)}u_{0}\right|\right) D_{s(.,.)}u_{0} D_{s(.,.)}\varphi d\mu 
\to 0, \quad \text{as } \epsilon \to 0,
\]
and
\[
\int_{\Omega_{\epsilon}} \widehat{a}_{x}(|u_{0}|)u_{0}\varphi dx \to 0, 
\quad \text{as } \varepsilon \to 0.
\]
Moreover, considering that $a_{x,y}(\cdot) \in (0,\infty)$, we can drop the term
\[
-\int_{\Omega_{\epsilon}} \int_{\Omega_{\epsilon}}
a_{x,y}\!\left(\left|D_{s(.,.)}u_{0}\right|\right) D_{s(.,.)}u_{0} D_{s(.,.)}u_{0} d\mu
- \int_{\Omega^{\epsilon}} \widehat{a}_{x}(|u_{0}|)u_{0}^2 dx,
\]
in \eqref{Innn1} as it is negative. Therefore, by dividing \eqref{Innn1} 
by $\epsilon$ and letting $\epsilon \to 0$, we find
\begin{equation}\label{Eqt4}
\int_{\Omega}\int_{\Omega} a_{x,y}\!\left(\left|D_{s(.,.)}u_{0}\right|\right)D_{s(.,.)}u_{0} D_{s(.,.)}\varphi d\mu
+ \int_{\Omega} \widehat{a}_{x}(|u_{0}|)u_{0}\varphi dx
- \int_{\Omega} h(x)u_{0}^{-m(x)}\varphi dx \geq 0. 
\end{equation}
Since $\varphi$ is arbitrary in $W^{s(.,.),G_{x,y}}(\Omega)$, then, we can replace $\varphi$ by $-\varphi$ in \eqref{Eqt4} , which implies that 
\begin{equation}\label{Eqt5}
\int_{\Omega}\int_{\Omega} a_{x,y}\!\left(\left|D_{s(.,.)}u_{0}\right|\right) D_{s(.,.)}u_{0} D_{s(.,.)}\varphi d\mu
+ \int_{\Omega} \widehat{a}_{x}(|u_{0}|)u_{0}\varphi dx
- \int_{\Omega} h(x)u_{0}^{-m(x)}\varphi dx \leqslant 0. 
\end{equation}
Combining \eqref{Eqt4} and \eqref{Eqt5}, we obtain 
\begin{equation}\label{Eqt6}
\int_{\Omega}\int_{\Omega} a_{x,y}\!\left(\left|D_{s(.,.)}u_{0}\right|\right) D_{s(.,.)}u_{0} D_{s(.,.)}\varphi d\mu
+ \int_{\Omega}\widehat{a}_{x}(|u_{0}|)u_{0}\varphi dx
- \int_{\Omega} h(x)u_{0}^{-m(x)}\varphi dx = 0.
\end{equation}
for all $\varphi\in W^{s(.,.),G_{x,y}}(\Omega) $. This concludes the proof.

\end{proof}

\section{Examples}
In this section, we present some examples of functions $G_{x,y}$ for which the existence result Theorem \ref{Theopr} may be applied.
\begin{itemize}
\item[1)] As a first example,we can take $G_{x,y}(t)=|t|^{p(x,y)}$ where $p\in C(\overline{\Omega} \times \overline{\Omega})$ satisfies $2\leqslant p(x,y)<N$ for all $(x,y)\in \overline{\Omega} \times \overline{\Omega}$.\\
  In this case, the problem \eqref{Prob1} reduces to the following variable order fractional $p(x,.)$-Laplacian problem
   \begin{equation}\label{Prob2}
\begin{cases}
(-\Delta)^{s(.,.)}_{p(x,.)} u + |u|^{\overline{p}(x)}u =  h(x)u^{-m(x)}, & \text{ in } \Omega, \\
u = 0, & \text{ on } \mathbb{R}^{N}\setminus \Omega, 
\end{cases}
\end{equation}
where $\overline{p}(x)=p(x,x)$ for all $x\in \overline{\Omega}$. Here, the operator $(-\Delta)^{s(.,.)}_{p(x,\cdot)}$ is the variable order fractional $p(x,\cdot)$-Laplacian
operator defined as follows
\[
(-\Delta)^{s(.,.)}_{p(x,\cdot)}u(x)
= 2 \lim_{\epsilon \to 0^+}\int_{\Omega}
\frac{|u(x)-u(y)|^{p(x,y)-2}\,(u(x)-u(y))}{|x-y|^{N+s(x,y) p(x,y)}}\, dy
\qquad \text{for all } x\in \Omega.
\]
It is clear that the generalized $N$-function $G_{x,y}$ satisfies the assumptions $(\hyperref[cond:G1]{g_{1}})-(\hyperref[cond:G1]{g_{4}})$, \eqref{ConditionN} and $G_{x,y}\in \mathcal{B}_{f}$. In this case, we can take $g^{-}=p^{-}$ and $g^{+}=p^{+}$. Then, we can extract the following result.

\medskip

\begin{remark}   
Then problem \eqref{Prob2} has a nontrivial weak solution  
$u_{0}\in W^{s(.,.),p(x,y)}(\Omega)$.
\end{remark}
\item[2)]  As a second example,we can take $$\displaystyle G_{x,y}(t)=p(x,y)\frac{|t|^{p(x,y)}}{log(1+|t|)}+\int_0^{|t|}\frac{\tau^{p(x,y)}}{(1+\tau)(log(1+\tau))^2}d\tau,$$
where $p\in C(\overline{\Omega} \times \overline{\Omega})$ satisfies $2\leqslant p(x,y)<N$ for all $(x,y)\in \overline{\Omega} \times \overline{\Omega}$.\\
 Then, in this case problem \eqref{Prob1} redeuces to the problem 
   \begin{equation}\label{Prob3}
\begin{cases}
\displaystyle(-\Delta)^{s(.,.)}_{G_{x,y}} u(x) + \frac{\overline{p}(x)|u|^{\overline{p}(x)-2}u}{log(1+|u|) }=  h(x)u^{-m(x)}, & \text{ in } \Omega, \\
u = 0, & \text{ on } \mathbb{R}^{N}\setminus \Omega. 
\end{cases}
\end{equation}
Here the operator $(-\Delta)^{s(.,.)}_{G_{x,y}}$ is defined by
$$(-\Delta)^{s(.,.)}_{G_{x,y}}u(x)=2 \lim_{\epsilon \to 0^+}\int_{\mathbb{R}^N \setminus B_\epsilon (x)}\frac{p(x,y)|D_{s(.,.)}u|^{p(x,y)-2}D_{s(.,.)}u}{log(1+|D_{s(.,.)}u|) |x-y|^{N+s(x,y)}}dy \quad \text{ for all } x\in \Omega.$$
It is easy to see that $G_{x,y}$ is a generalized $N$-function and satisfies the assumptions $(\hyperref[cond:G1]{g_{1}})-(\hyperref[cond:G1]{g_{4}})$, \eqref{ConditionN} and $G_{x,y}\in \mathcal{B}_{f}$. In this case, we can take $g^{-}=p^{-}$ and $g^{+}=p^{+}$. Then, we can extract the following result.

\begin{remark} 
Then problem \eqref{Prob3} has a nontrivial weak solution  
$u_{0}\in W^{s(.,.),p(x,y)}(\Omega)$.
\end{remark}
\item[3)] As a third example,we can take $$\displaystyle G_{x,y}(t)=|t|^{p(x,y)}log(1+|t|),$$
where $p\in C(\overline{\Omega} \times \overline{\Omega})$ satisfies $2\leqslant p(x,y)<N$ for all $(x,y)\in \overline{\Omega}\times \overline{\Omega}$.\\
Then, the problem \eqref{Prob1} becomes 
\begin{equation}\label{Prob4}
\begin{cases}
\displaystyle(-\Delta)^{s(.,.)}_{G_{x,y}} u(x) + \overline{p}(x)|u|^{\overline{p}(x)-2}ulog(1+|u|)+\frac{|u|^{\overline{p}(x)}}{1+|u|}=  h(x)u^{-m(x)}, & \text{ in } \Omega, \\
u = 0, & \text{ on } \mathbb{R}^{N}\setminus \Omega. 
\end{cases}
\end{equation}
with  $$
\begin{aligned}(-\Delta)^{s(.,.)}_{G_{x,y}}u(x)=2 \lim_{\epsilon \to 0^+}\int_{\mathbb{R}^N \setminus B_\epsilon (x)}&p(x,y)|D_{s(.,.)}u|^{p(x,y)-2}log(1+|D_{s(.,.)}u|)\\&+\frac{|D_{s(.,.)}u|^{p(x,y)-1}}{1+|D_{s(.,.)}u|}\frac{dy}{|x-y|^{N+s(x,y)}}
\end{aligned}$$
 \end{itemize}
for all $x\in \Omega$. It is easy to see that $G_{x,y}$ is a generalized $N$-function and satisfies the assumptions $(\hyperref[cond:G1]{g_{1}})-(\hyperref[cond:G1]{g_{4}})$, \eqref{ConditionN} and $G_{x,y}\in \mathcal{B}_{f}$. In this case, we can take $g^{-}=p^{-}$ and $g^{+}=p^{+}$. Then, we can extract the following result.

\begin{remark} 
Then problem \eqref{Prob4} has a nontrivial weak solution  
$u_{0}\in W^{s(.,.),p(x,y)}(\Omega)$.
\end{remark}
\section*{Compliance with ethical standards}
\textbf{Conflict of interest}  The authors declare that they have no conflict of interest.
\bibliographystyle{plain}

\end{document}